\theoremstyle{plain}
\newtheorem{theorem}{Theorem}
\newtheorem{lemma}[theorem]{Lemma}
\newtheorem{proposition}[theorem]{Proposition}
\newtheorem{corollary}[theorem]{Corollary}
\newtheorem{conjecture}{Conjecture}
\newtheorem{definition}{Definition}
\newtheorem{remark}{Remark}
\numberwithin{equation}{section}
\begin{document}
\title{On the Collatz general problem $qn+1$}

\author{Robert Santos}

\email{rsants@gmail.com}               

\begin{abstract}
In this work the generalized Collatz problem $qn+1$ ($q$ odd) is studied. As a natural generalization of the original $3n+1$ problem, it consists of a discrete dynamical system of an arithmetical kind. Using standard methods of number theory and dynamical systems, general properties are established, such as the existence of finitely many periodic sequences for each $q$. In particular, when $q$ is a Mersenne number, $q=2^p-1$, there only exists one such cycle, known as the trivial one. Further analysis based on a probabilistic model shows that for $q=3$ the asymptotic behavior of all sequences is always convergent, whereas for $q\geq 5$ the asymptotic behavior of the sequences is divergent for almost all numbers (for a set of natural density one). This leads to the conclusion that the so called Collatz Conjecture is true, and that $q=3$ is a very special case among the others (Crandall conjecture). Indeed, it is conjectured that the general problem $qn+1$ is undecidable.
\end{abstract}


\maketitle

\section{Introduction}\label{sec:intro}
The Collatz conjecture, also known as the $3n+1$ problem \footnote{Usually, it is denoted by the $3x+1$ problem, but following Wirsching \cite{wirsching_dynamical_1998}, we rather prefer the $3n+1$ nomenclature to highlight that we deal with positive integer numbers.}, is a famous mathematical problem stated in the 1930's by Lothar Collatz \cite{Collatz_motivation_1986}. It asserts that given any positive integer number $n$, the algorithm that iteratively computes $(3n+1)/2$ if $n$ is odd and $n/2$ if $n$ is even, produces a sequence of numbers that always reaches $1$. Without exceptions. And numerical experiments show that this is true for all $n \leq 2^{68}$ \cite{barina_convergence_2020,roosendaal_3x_nodate,Oliveira.e.Silva-2010-1-EVCC}. The interested reader will find a wide exposition about the history of the problem in \cite{wirsching_dynamical_1998,lagarias_ultimate_2011,chamberland_update_2003}.

There are few theoretical results concerning the $3n+1$ problem. Among the most strong, it has been proven that the number of integer values in $[1,x]$ such that the conjecture is true is greater than $x^{0.84}$ for a sufficiently large $x$ \cite{krasikov_bounds_2003}. The approach of the authors uses a non linear programming method to obtain lower bounds for solutions of a system of functional difference inequalities (Krasikov's inequalities) associated to the $3n+1$ problem. They also expected that an improvement of the form $x^{1-\epsilon}$ for any $\epsilon>0$ could be reached with intensive large computations, although being exponentially consuming. But no better results has been published since then. 

Also, there are a number of results in the \emph{almost all} setting, beginning with the earliest work by Terras \cite{terras_stopping_1976} to the most recent result by Tao \cite{tao_almost_2019}. Informally speaking, they state that for almost all numbers, in the sense of a density one set, the infimum of the sequence produced by the algorithm is bounded by a function that depends on the initial input. Unfortunately, these results neither rule out the existence of a (very exotic) divergent sequence nor says nothing about the location of periodic behaviors.

The study of a generalization of the Collatz problem, say $qn+1$ with $q$ any odd number, has been reveled very fruitful to understand the original $3n+1$ problem, and also posses the background needed to show why the case $q=3$ is such an special case among the others. That is the problem we analyze in this paper. There are also important previous results on it. Crandall was one of the first who worked on the $qn+1$ problem \cite{crandall_3x1_1978}, and formulated the conjecture that for $q\geq 5$ there is, at least, one sequence that never visits 1. He proved its validity for the cases $q=5$, $q=181$, and $q=1093$ (extensible also to $q=3511$ as pointed out by Lagarias \cite{lagarias20033x1}), and gave an heuristic probabilistic argument for the remaining cases. Along this line, Matthews and his collaborators \cite{matthews_generalized_2010} extended the conjecture to generalized Collatz maps for which the $qn+1$ is an special case. They used Markov chains to show its plausibility, and also showed that the extension of these maps to the complete set of $d$-adic integers $\mathbb Z_d$, $d\geq 2$, are ergodic and, consequently, satisfy the conjecture. Unfortunately, this does not solve the $qn+1$ problem since the set of positive integers $\mathbb Z^+$ is a zero-measure subset of $\mathbb Z_d$.  

But the most surprising result is due to Conway, who proved that an slightly variant of the $qn+1$ problem is undecidable \cite{conway_unpredictable_1972}. It seems that the original $3n+1$ problem lies just at the edge of the decidability/undecidability threshold, and this explains partially its hardness. Since in this paper we claim that the Collatz conjecture is true, its decidability automatically follows. However, in section \S \ref{sec:decidability}, we discuss the possible undecidability of the more general $qn+1$ algorithm.

In what follows we present the formal statement of the Collatz general problem and we introduce some basic concepts and notation used throughout this paper. After that, we show the most important results of this work and we sketch their proofs.       
 
\subsection{Statement of the Collatz general problem}\label{subsec:statement}

\begin{definition}\label{def:general_problem}
(The collatz general problem $qn+1$, with $q$ any odd number). 
It consists of an algorithm that asks for a given positive integer number $n$ whether is odd or even. Then applies the following generalized Collatz-type function \cite{matthews_generalized_2010}
\begin{align}\label{eq:T_function}
	T_q(n) = \left\{ \begin{tabular}{ll} 
		$\cfrac{n}{2}$ &   if $n$ even\\[5mm]
		$\cfrac{qn+1}{2}$ & if $n$ odd 
	\end{tabular} \right.
\end{align}
and keeps iterating (indefinitely) on the result. Thus, we obtain an infinite sequence of numbers, defined as the \emph{sequence} (or orbit) of the $qn+1$ algorithm with \emph{seed} (initial condition) $n_0$, 
		 \[ SQN_{q}(n_0) = \{n_0, n_1, \ldots, n_j, \ldots \} = \{T_q^j(n_0)\}_{j \in \mathbb N}, \]
		where $n_j = T_q^j(n_0) = \overbrace{T_q \circ T_q \cdots \circ T_q}^{j}(n_0)$, $n_j \in \mathbb Z^{+}$ for all $j \in \mathbb N$, and $T_q^0 \equiv \text{Id}$ (by usual convention). The first $k \in \mathbb Z^+$ members of the sequence constitute the \emph{truncated sequence of length $k$}, and is denoted by
			\[ SQN^k_{q}(n_0) = \{n_0, n_1, \ldots, n_{k-1}\} = \{ T_q^j(n_0) \}_0^{k-1}.\]
\end{definition}

The Collatz function $3n+1$ is easily seen as the particular case $q=3$ in that generalized Collatz-type function. Regarding notation, $\mathbb Z^{+}$ is the set of positive integers, $\mathbb N = \{0,1,2,\ldots\}$ is the set of natural numbers, $\circ$ accounts for the composition of functions, capital letters are used for functions and sequences, calligraphic letters are used for sets, bold face is used for vectors, and $\#\mathcal{A}$ means the cardinality of the set $\mathcal A$. Other particular notation used along the text will be conveniently addressed when necessary. Thus, for instance,
\begin{align} 
	&SQN_3(2)=\{2,1,2,1\ldots \} \quad &SQN_3(7)=\{7,11,17,26,13 \ldots \} \nonumber \\ 
  &SQN^5_5(7)=\{7,18,9,23,58\} \quad &SQN^6_7(6)=\{6,3,11,39,137,480\} \nonumber
\end{align}

If $q=1$, the problem is trivial. There exists a unique fixed point $n=1$, and all the sequences are convergent to it. Consequently, only odd values $q\geq 3$ will be considered hereafter.

\subsection{The conjugacy map and the modified Collatz general function}\label{subsec:conjugacy}

We follow a numerical theoretic perspective to study the $qn+1$ problem, although some key concepts of dynamical systems theory are used as well. One of such concepts, the conjugacy of two dynamical systems, is a cornerstone in this work.

\begin{lemma}\label{lemma:conjugacy}
	Given an odd number $q\geq 3$, define the conjugacy map, 
	\begin{align}\label{eq:X_trans}
		X_q: \mathbb Z^{+} \longrightarrow &\mathbb Z_{cq}=\left\{ x\in \mathbb Z^{+} \; :\;\; x \equiv 1 \mod 2(q-1)  \right\} \nonumber \\
				n \longrightarrow &x=X_q(n)=2(q-1)n+1 
	\end{align}
	and the modified Collatz general function
	\begin{equation}\label{eq:modified_Collatz_function}
		F_q(x)=\cfrac{q^{\alpha_q(x)}}{2} (x+1),  
	\end{equation}
	where $\alpha_{q}(x)$ is a parity-type function (odd/even) defined as	
	\begin{equation}
		 \alpha_{q}(x) =
		\left\{ \begin{tabular}{ll} 
			$1$ &  if\; $x-1\equiv 2(q-1)$ \quad $\mod 4(q-1)$ \\
			$0$ &  if\; $x-1\equiv 0$ \hspace{14mm} $\mod 4(q-1)$
		\end{tabular}. \right.
	\end{equation}		
Then, for all odd numbers $q\geq 3$, we have $T_q \equiv X_q^{-1} \circ F_q \circ X_q$, where $X_q^{-1}$ is the inverse map of $X_q$. Consequently, the dynamical properties of $T_q(n)$ are the same as $F_q(x)$, and the sequence $S_q(x_0)=\{x_0, x_1, \ldots, x_j, \ldots\} = \{F_q^j(x_0)\}_{j \in \mathbb N}$ is equivalent to $SQN_q(n_0)$.
\end{lemma}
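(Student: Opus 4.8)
The plan is to verify the conjugacy relation directly, exploiting that $X_q$ is a bijection. Since $X_q$ carries the explicit inverse $X_q^{-1}(x)=\frac{x-1}{2(q-1)}$ on $\mathbb{Z}_{cq}$, the asserted identity $T_q = X_q^{-1}\circ F_q\circ X_q$ is equivalent to the commutativity condition $X_q\circ T_q = F_q\circ X_q$ on $\mathbb{Z}^+$, and this is the form I would actually check. First I would confirm that $X_q$ genuinely lands in $\mathbb{Z}_{cq}$ and is a bijection onto it: for any $n$ we have $X_q(n)-1 = 2(q-1)n \equiv 0 \pmod{2(q-1)}$, so the image lies in $\mathbb{Z}_{cq}$, and conversely every $x\equiv 1 \pmod{2(q-1)}$ has the unique preimage $(x-1)/(2(q-1))\in\mathbb{Z}^+$.

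The core of the argument is a two-case computation keyed to the parity of $n$, together with the observation that this parity is exactly what $\alpha_q$ reads off. Writing $x = X_q(n) = 2(q-1)n+1$, we have $x-1 = 2(q-1)n$, so that $x-1\equiv 0 \pmod{4(q-1)}$ precisely when $n$ is even and $x-1\equiv 2(q-1)\pmod{4(q-1)}$ precisely when $n$ is odd. This simultaneously shows that $\alpha_q$ is well defined and exhaustive on $\mathbb{Z}_{cq}$, and that $\alpha_q(X_q(n))$ equals the parity indicator of $n$ (namely $0$ for even and $1$ for odd), which is the linchpin of the whole verification.

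With this in hand I would evaluate both sides in each case. If $n$ is even, then $T_q(n)=n/2$ and $\alpha_q(x)=0$, so on one side $X_q(T_q(n)) = (q-1)n+1$ while on the other $F_q(x) = \frac{x+1}{2} = (q-1)n+1$; the two agree. If $n$ is odd, then $T_q(n)=\frac{qn+1}{2}$ and $\alpha_q(x)=1$, so $X_q(T_q(n)) = (q-1)(qn+1)+1 = q(q-1)n+q$ while $F_q(x) = \frac{q(x+1)}{2} = q\bigl((q-1)n+1\bigr) = q(q-1)n+q$; again they coincide. This establishes $X_q\circ T_q = F_q\circ X_q$ for every $n\in\mathbb{Z}^+$, hence the claimed conjugacy. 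Iterating the relation yields $X_q\circ T_q^{\,j} = F_q^{\,j}\circ X_q$ for all $j$, so the orbit $\{F_q^{\,j}(X_q(n_0))\}_{j}$ is the $X_q$-image of $SQN_q(n_0)$, which gives the asserted equivalence of sequences and of dynamical properties.

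I do not expect a genuine obstacle here, since the statement reduces to elementary modular arithmetic. The only point demanding care is the bookkeeping around $\alpha_q$: one must confirm that the two congruence conditions defining it partition $\mathbb{Z}_{cq}$ exactly, so that no $x$ is left undefined and none satisfies both, which is precisely the content of the parity correspondence above. Everything else is routine substitution.
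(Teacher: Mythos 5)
Your proof is correct and takes essentially the same route as the paper: both verify the commutation $X_q\circ T_q = F_q\circ X_q$ by observing that $\alpha_q(X_q(n))$ coincides with the parity indicator of $n$, your two-case substitution being just an unpacked version of the paper's single compact formula $T_q(n)=\bigl(q^{\alpha_N(n)}((q-1)n+1)-1\bigr)/(2(q-1))$. Your added checks that $X_q$ is a bijection onto $\mathbb{Z}_{cq}$ and that the two congruence conditions defining $\alpha_q$ partition $\mathbb{Z}_{cq}$ are welcome bookkeeping the paper leaves implicit.
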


\noindent Thus, for instance, the counterparts of the sequences in the previous example are
\begin{align*} 
	&S_3(X_3(2))=\{9,5,9,5\ldots \} &S_3(X_3(7))=\{29,45,69,105,53 \ldots \} \\ 
  &S^5_5(X_5(7))=\{57,145,73,185,465\} &S^6_7(X_7(6))=\{73,37,133,469,1645,5761\} 
\end{align*} 

\begin{proof}
The function $T_q(n)$ could be expressed in a more compact form as
\[ T_q(n)=\frac{n}{2}+\left((-1)^{n+1} + 1 \right)\frac{((q-1)n+1)}{4}=\frac{q^{\alpha_N(n)}\left((q-1)n+1\right)-1}{2(q-1)},
\]
where the $\alpha_N$-parity function is defined as 
	\[
		\alpha_N(n) =
		\left\{ \begin{tabular}{ll} 
			$1$ & if $n\equiv 1$ \quad $\mod 2$ \\
			$0$ & if $n\equiv 0$ \quad $\mod 2$
		\end{tabular}. \right.
	\]

\noindent Taking into account the (invertible) affine map \ref{eq:X_trans}, we have that
\begin{align*}
		X_q(T_q(n)) &= 2(q-1)T_q(n)+1 = \cfrac{q^{\alpha_N(n)}}{2}(X_q(n) +1) \\ 
				&= \cfrac{q^{\alpha_q(X_q(n))}}{2}(X_q(n) +1) = F_q(X_q(n)) 
\end{align*}
\end{proof}

Thus, henceforth, we focus our study on the modified Collatz general function $F_q(x)$ which produces truncated sequences $S_q^k(x_0)=\{F_q^j(x_0)\}_0^{k-1}$ of arbitrarily large $k\in \mathbb Z^+$ lengths for every seed $x_0 = X_q(n_0)$.   

\subsection{The parity sequence}\label{subsec:parity}

Another key concept is the \emph{parity sequence} $A(q;x_0)$, which is a binary string (sequence of ones and zeros) that assembles all the evaluations of the parity function $\alpha_q(x)$ along the successive iterates of $F_q(x_0)$. As in the case of the sequences $S^k_q(x_0)$, the first $k$ members of that infinitely long parity sequence $A(q;x_0)$ define the \emph{truncated parity sequence} of length $k$ or \emph{parity vector} of length $k$, $A^k(q;x_0) \equiv \mathbf A_k = (\alpha^0, \ldots, \alpha^{k-1})$, $\alpha^j \equiv \alpha_q(x_j)$, where the dependence on $q$ and on $x_0$ is made implicit, when possible, to ease the notation. The following definitions will be also helpful.

\begin{definition}\label{def:parity}
		~				
	\begin{enumerate}[i)]		
		\item \emph{Partial parity of the sequence} $S_q^k(x_0)$: \\ Given $i,j \in \mathbb N$, $0\leq i \leq j < k$, $|\mathbf A_k|_{i}^{j} = \sum_{s=i}^{j} \alpha_q(x_s)$.
		\item \emph{Total parity} of the sequence $S_q^k(x_0)$: \\ $P_k = |\mathbf A_k|_{0}^{k-1} = \sum_{j=0}^{k-1} \alpha_q(x_j), \quad 0\leq P_k \leq k$.
		\item \emph{Parity coefficient (or ones-ratio)} for the sequence $S_q^k(x_0)$: \\ $\mu_k = \frac{P_k}{k}= \frac{1}{k} \sum_{j=0}^{k-1} \alpha_q(x_j)$, $0\leq \mu_k \leq 1.$
	\end{enumerate}
\end{definition}

\noindent The following lemma extends previous results about the parity sequence \cite{terras_stopping_1976,everett_iteration_1977,lagarias_3x_1985,matthews_generalized_2010}.

\begin{lemma}\label{lemma:parity_sequence}
	Given an odd number $q\geq 3$, two integer numbers $x_0, y_0 \in \mathbb Z_{cq}$, and any $k \in \mathbb Z^+$, the following conditions hold:
	\begin{enumerate}[i)] 
		\item $x_0$ and $y_0$ have the same parity vector of length $k$ if and only if $x_0 \equiv y_0 \mod (q-1)2^{k+1}$.		
		\item $A(q;x_0)=A(q;y_0)$ if and only if $x_0 = y_0$.
		\item There exists a bijection between the set of seeds $\mathcal X_k =\{X_q(n_0) : \; n_0=1,2,3,\ldots, 2^k\}$ and the set of parity vectors of length $k$, $\mathcal A_k = \{0, 1\}^k= \underbrace{\{0, 1\} \times \cdots \times \{0, 1\}}_{k}$. 			
	\end{enumerate}	
\end{lemma}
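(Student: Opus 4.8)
The plan is to transfer the entire statement to the original map $T_q$ acting on $\mathbb Z^+$ via the conjugacy of Lemma \ref{lemma:conjugacy}, and then to run the classical parity-vector induction (in the spirit of Terras--Everett--Lagarias) for the generalized map, carefully tracking how the conjugacy $X_q$ rescales the natural modulus $2^k$ into $(q-1)2^{k+1}$. First I would record that the two parity functions agree under conjugacy: writing $x_j=X_q(n_j)$, the defining congruences of $\alpha_q$ together with $x-1=2(q-1)n$ give $\alpha_q(x_j)=\alpha_N(n_j)$, so the parity vector of length $k$ of the $F_q$-orbit of $x_0$ equals that of the $T_q$-orbit of $n_0=X_q^{-1}(x_0)$. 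It therefore suffices to prove the three claims for $T_q$, reading the congruence $x_0\equiv y_0 \bmod (q-1)2^{k+1}$ as $n_0\equiv m_0 \bmod 2^k$; the two are equivalent because $x_0-y_0=2(q-1)(n_0-m_0)$, so dividing the modulus by the factor $2(q-1)$ is a bijection of the relevant residue classes inside $\mathbb Z_{cq}$.

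Next I would prove the two implications of (i) separately, using the compact form $T_q(n)=\tfrac12\bigl(q^{\alpha_N(n)}n+\alpha_N(n)\bigr)$ already derived in the proof of Lemma \ref{lemma:conjugacy}. For the forward direction (the residue of $n_0$ determines the parity vector) I would induct on $k$: the first parity $\alpha_N(n_0)$ depends only on $n_0 \bmod 2$, and since $q$ is odd the numerator $q^{\alpha_N(n_0)}n_0+\alpha_N(n_0)$ is determined modulo $2^k$ by $n_0 \bmod 2^k$, so $n_1=T_q(n_0)$ is determined modulo $2^{k-1}$; applying the induction hypothesis to $n_1$ controls the remaining $k-1$ parities. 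For the converse I would iterate the compact form to obtain the closed expression
\[
n_k=T_q^k(n_0)=\frac{q^{P_k}\,n_0+R_k}{2^k},
\]
where $P_k$ is the total parity and $R_k\in\mathbb Z$ satisfies the recursion $R_{k+1}=q^{\alpha_k}R_k+\alpha_k 2^k$ and hence depends only on the parity vector $\mathbf A_k$ and on $q$. Because $q$ is odd, $q^{P_k}$ is a unit modulo $2^k$, so clearing denominators gives $n_0\equiv -R_k\,(q^{P_k})^{-1}\bmod 2^k$, i.e.\ the parity vector pins down $n_0 \bmod 2^k$ uniquely.

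These two implications already yield (iii): the forward direction makes $n_0\mapsto\mathbf A_k$ well defined on residues modulo $2^k$, the converse makes it injective there, and since $\{1,2,\dots,2^k\}$ is a complete residue system modulo $2^k$ with $\#\mathcal A_k=2^k=\#\mathcal X_k$, an injection between finite sets of equal cardinality is automatically a bijection; composing with $X_q$ transports it to the stated bijection $\mathcal X_k\to\mathcal A_k$. Finally, (ii) is the limiting case of (i): if $A(q;x_0)=A(q;y_0)$ then $x_0\equiv y_0 \bmod (q-1)2^{k+1}$ for every $k$, so $2^{k+1}\mid(x_0-y_0)$ for all $k$, forcing $x_0=y_0$, while the reverse implication is immediate.

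The hard part will not be any single implication but the bookkeeping for the closed formula: verifying that $R_k$ genuinely depends only on $\mathbf A_k$ and $q$ (not on $n_0$), and confirming that $q^{P_k}$ is invertible modulo $2^k$ — both of which rest entirely on $q$ being odd. Once these are in hand, the equal-cardinality counting argument upgrades injectivity to the desired bijection with no further computation, and the rescaling by $2(q-1)$ handles the translation of the modulus in the passage from $T_q$ to $F_q$.
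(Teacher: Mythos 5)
Your proposal is correct, and its skeleton (induction for the forward direction of (i), a closed formula plus the invertibility of $q^{P_k}$ modulo $2^k$ for the converse, an equal-cardinality counting argument for (iii)) matches the paper's; the difference is that you work throughout in the $T_q$ coordinates on $\mathbb Z^+$ modulo $2^k$ and translate back through $X_q$ at the end, whereas the paper argues directly on $F_q$ over $\mathbb Z_{cq}$ by writing $y_0 = x_0 + C(q-1)2^{k+1}$ and propagating that explicit perturbation through the iterates. Your route buys two things. First, the modulus bookkeeping is cleaner: since $x_0-y_0=2(q-1)(n_0-m_0)$, the congruence $n_0\equiv m_0 \bmod 2^k$ is exactly equivalent to $x_0\equiv y_0\bmod (q-1)2^{k+1}$, whereas the paper's converse only extracts $y_0\equiv x_0\bmod 2^k$ from the closed formula and then appeals to membership in $\mathbb Z_{cq}$ to upgrade the modulus, a step that as written only yields the lcm of $2^k$ and $2(q-1)$ rather than the full $(q-1)2^{k+1}$; your version avoids that imprecision. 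Second, your claim (ii) follows as a clean limiting case of (i) ($2^{k+1}\mid x_0-y_0$ for all $k$ forces equality), while the paper instead computes the $(k{+}1)$-st parity directly and shows it must differ unless $C=0$ --- both work, but yours requires no extra computation. The two genuine obligations you flag (that $R_k$ depends only on $\mathbf A_k$ and $q$, and that $q^{P_k}$ is a unit modulo $2^k$) are exactly right and both follow from $q$ being odd, as you say; the recursion $R_{k+1}=q^{\alpha_k}R_k+\alpha_k 2^k$ manifestly involves only the parities, so there is no gap.
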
   

This result shows that the dynamics of $F_q(x)$ is encoded through the parity sequence. The relevance of the parity sequence comes, then, from its uniqueness for each seed $x_0$, which has important implications for periodicity conditions, and for its utility to 'show' the asymptotic behavior of the sequence. This will be addressed in detail in sections \S \ref{sec:periodic} and \S \ref{sec:asymptotic}, respectively. 

\begin{proof}
	We begin with claim $(i)$. For each $k \in \mathbb Z^{+}$ and any odd constant $C$, let be $y_0 = x_0 + C(q-1)2^{k+1}$. Then, 
	\[ \frac{y_0 - 1}{4(q-1)} = \frac{x_0 - 1}{4(q-1)} + C 2^{k-1}. \]
	Thus $\alpha_q(y_0) = \alpha_q(x_0) = \alpha^0$. If $k > 1$, let assume $\alpha_q(y_i) = \alpha_q(x_i) = \alpha^i$ for all $0\leq i<k-1$ and
	\[ y_i = x_i + C q^{|\mathbf A_k|_{0}^{i-1}} (q-1) 2^{k+1-i}. \]
	Then,
	\begin{align}
	 y_{i+1} &= \frac{q^{\alpha^i}}{2} (x_i + 1) + \frac{q^{\alpha^i}}{2} \left( C q^{|\mathbf A_k|_{0}^{i-1}} (q-1) 2^{k+1-i} \right) \nonumber \\
			&= x_{i+1} + C q^{|\mathbf A_k|_{0}^{i}} (q-1) 2^{k-i}, \nonumber 
	\end{align}
	and,
	\[ \frac{y_{i+1} - 1}{4(q-1)} = \frac{x_{i+1} - 1}{4(q-1)} + C q^{|\mathbf A_k|_{0}^{i}} 2^{k-(i+2)}. \]
	Thus, $\alpha_q(y_{i+1}) = \alpha_q(x_{i+1}) = \alpha^{i+1}$, and therefore, $A^k(q;y_0) = A^k(q;x_0) = \mathbf A_k = (\alpha^0, \ldots, \alpha^{k-1})$.
	
	Conversely, let assume $A^k(q;y_0) = A^k(q;x_0) = \mathbf A_k = (\alpha^0, \ldots, \alpha^{k-1})$. Using proposition \ref{prop:formula_xk}, we have for all $k \in \mathbb Z^{+}$	
	\[ x_k = \frac{q^{P_k}}{2^k}x_0 + \frac{1}{2^k}\sum_{j=0}^{k-1} 2^j q^{|\mathbf A_k|_{j}^{k-1}}, \quad \quad y_k = \frac{q^{P_k}}{2^k}y_0 + \frac{1}{2^k}\sum_{j=0}^{k-1} 2^j q^{|\mathbf A_k|_{j}^{k-1}}. \]
	Subtracting the former from the latter we obtain
	\[ y_k - x_k = \frac{q^{P_k}}{2^k} (y_0 - x_0). \]
	Hence $y_0 \equiv x_0 \mod 2^k$. Since $x_0, y_0 \in \mathbb Z_{cq}$, $y_0$ must be equivalent to $x_0$ modulo $(q-1)2^{k+1}$.
	
	Now, given that the parity sequence is infinitely long (no stopping criterion), claim $(ii)$ follows directly from 
		\[ y_k = x_k + C 2 (q-1) q^{P_k}, \quad k \in \mathbb Z^{+},\]
 \noindent where the truncated sequences $S^k_q(x_0) = \{ x_j \}_0^{k-1}$ and $S_q^k(y_0) = \{ y_j \}_0^{k-1}$ have the same parity vector $\mathbf A_k=(\alpha^0, \ldots, \alpha^{k-1})$. Then, the next parity evaluation comes from
 \[ \frac{y_k-1}{4(q-1)} = \frac{x_k-1}{4(q-1)} + \frac{1}{2} C q^{P_k}, \]
and $\alpha_q(y_k) \neq \alpha_q(x_k)$ unless $C=0$, that is, $y_0 = x_0$.

 Finally, for claim $(iii)$, let $n_0$ be a natural number in the interval $[1, 2^k]$. The seed $X_q(n_0)=x_0$ leads to the parity vector $A^k(q;x_0) \in \mathcal A_k$. Now, assume that there exists a natural number $m_0 \neq n_0$ in the interval $[1, 2^k]$ such that the seed $X_q(m_0)=y_0$ has the same parity vector of $x_0$, $A^k(q;y_0)=A^k(q;x_0)$. Applying claim $(i)$, we have that $y_0 \equiv x_0 \mod (q-1)2^{k+1}$, which leads to $|y_0 - x_0|\geq 2^{k+1}(q-1)$, where $|\cdot|$ refers to the standard absolute value. But $|X_q(m_0)-X_q(n_0)|<2(q-1)(2^k-1)$, and we get a contradiction. Therefore, every seed in $\mathcal X_k$ has a different parity vector of length $k$. Since the cardinality of $\mathcal A_k$ is precisely $2^k$, from a counting argument there is a one-to-one correspondence between the sets $\mathcal X_k$ and $\mathcal A_k$.  
\end{proof}

\begin{remark}\label{remark:parity_sequence}
	If there exist $x, y \in \mathbb Z_{cq}$ that belong to the same periodic sequence, their parity sequences are identical but shifted an amount $\ell \in \mathbb Z^+$ such that $F_q^{\ell}(x)=y$.  
\end{remark}

\subsection{Closed formulas for the forward iterates}\label{subsec:formulas}

\begin{proposition}\label{prop:formula_xk}
	Given an odd number $q \geq 3$ and any $k \in \mathbb Z^+$, the $k-th$ forward iterate of the sequence $S_q(x_0)$ has two equivalent expressions,
	\begin{flalign}\label{eq:xk_1}
			x_k &= F_q^k(x_0)=\frac{q^{P_k}}{2^k}x_0 + \frac{1}{2^k}\sum_{j=0}^{k-1} 2^j q^{|\mathbf A_k(x_0)|_{j}^{k-1}} 
	\end{flalign}
	and
	\begin{equation}\label{eq:xk_2}
			x_k = F_q^k(x_0)=x_0\frac{q^{P_k}}{2^k} \prod_{j=0}^{k-1} \left(1 + \frac{1}{x_j} \right).			
	\end{equation}
\end{proposition}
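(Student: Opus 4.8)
The plan is to establish the two expressions separately, deriving both from the defining recurrence $x_{j+1} = F_q(x_j) = \frac{q^{\alpha^j}}{2}(x_j+1)$, where I abbreviate $\alpha^j \equiv \alpha_q(x_j)$.

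For the first expression \eqref{eq:xk_1}, I would argue by induction on $k$. The base case $k=1$ is immediate: the recurrence gives $x_1 = \frac{q^{\alpha^0}}{2}(x_0+1)$, which matches the claimed formula since $P_1 = \alpha^0$ and $|\mathbf A_1|_0^0 = \alpha^0$. For the inductive step I would assume the formula holds at level $k$ and substitute it into $x_{k+1} = \frac{q^{\alpha^k}}{2}(x_k+1)$. Distributing the factor $\frac{q^{\alpha^k}}{2}$ produces three pieces: the leading term becomes $\frac{q^{\alpha^k+P_k}}{2^{k+1}}x_0$, and multiplying the sum by $q^{\alpha^k}$ raises each exponent $|\mathbf A_k|_j^{k-1}$ to $|\mathbf A_k|_j^{k-1}+\alpha^k$. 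The two accounting identities that drive the step are $P_{k+1} = P_k + \alpha^k$ and $|\mathbf A_{k+1}|_j^{k} = |\mathbf A_k|_j^{k-1}+\alpha^k$, which rewrite the leading exponent as $P_{k+1}$ and each exponent inside the sum as $|\mathbf A_{k+1}|_j^{k}$.

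The one subtle point is absorbing the stray additive constant $\frac{q^{\alpha^k}}{2}$ coming from the $+1$ into the sum as its new top term. Writing $\frac{q^{\alpha^k}}{2} = \frac{1}{2^{k+1}}\,2^{k}q^{\alpha^k}$ and noting that $|\mathbf A_{k+1}|_k^{k}=\alpha^k$, this constant is exactly the $j=k$ summand of $\frac{1}{2^{k+1}}\sum_{j=0}^{k}2^j q^{|\mathbf A_{k+1}|_j^{k}}$. Recombining the three pieces reproduces the formula at level $k+1$, which closes the induction.

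The second expression \eqref{eq:xk_2} needs no induction and follows from a telescoping product. I would factor $x_j$ out of $x_j+1$ in the recurrence to get $\frac{x_{j+1}}{x_j} = \frac{q^{\alpha^j}}{2}\left(1+\frac{1}{x_j}\right)$, which is legitimate since every iterate lies in $\mathbb Z^{+}$ and is therefore nonzero. Taking the product over $j=0,\ldots,k-1$, the left side telescopes to $x_k/x_0$, while the right side factors as $\frac{q^{P_k}}{2^k}\prod_{j=0}^{k-1}\bigl(1+\tfrac{1}{x_j}\bigr)$, because the exponents $\alpha^j$ sum to $P_k$ and each factor contributes one power of $\tfrac{1}{2}$. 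Multiplying through by $x_0$ yields the stated identity. The main obstacle in the whole argument is purely notational: keeping the partial-parity indices $|\mathbf A_k|_j^{k-1}$ correctly aligned when passing from length $k$ to length $k+1$ in the inductive step.
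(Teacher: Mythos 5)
Your proposal is correct and follows essentially the same route as the paper, which proves both formulas by induction on the recurrence $x_{j+1}=\frac{q^{\alpha_q(x_j)}}{2}(x_j+1)$; your telescoping-product derivation of the second identity is just the induction written multiplicatively. The index bookkeeping $P_{k+1}=P_k+\alpha^k$ and $|\mathbf A_{k+1}|_j^{k}=|\mathbf A_k|_j^{k-1}+\alpha^k$, together with the absorption of the constant $\frac{q^{\alpha^k}}{2}$ as the $j=k$ summand, is exactly the detail the paper leaves implicit, and you have handled it correctly.
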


These formulas are very useful to deal with arbitrary sequences. In particular, expression \ref{eq:xk_1} is well fitted to analyze periodic sequences and, in the case of $q=3$, is equal to the previous results of B\"ohm and Sontachi \cite{bohm_existence_1978} and Crandall \cite{crandall_3x1_1978}, although in their formulations the dependence on the parity vector is not explicit. Expression \ref{eq:xk_2} is related to the previous history of the sequence, and has important implications for its asymptotic behavior. Indeed, it's easily seen that for all sequences

\begin{equation}\label{eq:lower_limit}	
	\frac{F_q^k(x_0)}{x_0} > \left(\frac{q^{\mu_k}}{2}\right)^k. 
\end{equation}

\begin{proof}
Both formulas result by induction, taking into account in equation \ref{eq:xk_2} that for all $x_j \in S_q(x_0)$, we have the recurrence relation
\[x_{j+1}=F_q(x_j)=x_j\left(1+\frac{1}{x_j}\right)\frac{q^{\alpha_q(x_j)}}{2} \]
\end{proof} 

\subsection{Main results of this work}\label{subsec:main_results}

The results are divided in two blocks: \emph{periodic sequences} and \emph{the asymptotic behavior of the sequences}. Each block comprises half the way through the demonstration of the Collatz conjecture, the main goal of this paper. We begin this exposition with some definitions and a fundamental result concerning the number of periodic sequences of $F_q(x)$. 

\begin{definition}\label{def:cycle} (Cycle)
	Periodic sequence of period $p\in \mathbb Z^{+}$,
	\[ PS^p_q(x_0) = \left\{x_0, x_1,\ldots, x_{p-1}\right\}, \text{ where } x_{j+p}=x_j \in \mathbb Z_{cq}, \;\; 0\leq j \leq p-1. \]
	The period $p$ is defined as the minimum possible and is called \emph{the prime period}.	We also say that the function $F_q(x)$ has a cycle of length $p\in \mathbb Z^{+}$ at $x_0$, and $F_q^{j+p}(x_0)=F_q^{j}(x_0)$. Without loss of generality, the seed $x_0$ of the cycle is taken as the absolute minimum of the sequence. 	
\end{definition}

\begin{remark}
The parity coefficient $\mu_p = P_p/p$ of the periodic sequence $PS^p_q(x_0)$ is an irreducible fraction, i.e. $P_p$ and $p$ are co-prime.
\end{remark}

\begin{definition}\label{def:trivial_cycle}
	Trivial cycle. It is a periodic sequence of period $p$ that starts at $x_0 = X_q(1) =2q-1$. In other words, $F^p_q(2q-1)=2q-1$  (or equivalently $T^p_q(1)=1$) for some $p \in \mathbb Z^{+}$. All the other periodic sequences are called non-trivial cycles.
\end{definition}

The following theorem answers affirmatively the conjecture that, for each $q$, there are finitely many cycles in the $qn+1$ problem \cite{matthews_generalized_2010}. 

\begin{theorem}\label{theorem:main_periodicity}
	For each odd number $q\geq 3$, the function $F_q(x)$ has finitely many cycles. These cycles are distributed in $q-1$ families, and the seed $x_{0}$ of each cycle is
	\begin{equation}
	  x_{0}(\lambda_h) = X_q(h + \lambda_h q), \quad h \in \{1, 2, \ldots, q-1 \}, 
  \end{equation}
	for some $\lambda_h \in \mathbb N$. Moreover, if $h$ is even, then $\lambda_h$ must be odd, and vice versa.
\end{theorem}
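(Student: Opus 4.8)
The plan is to turn "how many cycles?" into a Diophantine question about the closed form \ref{eq:xk_1}, and to read off the residue of the cyclic seed modulo $q$. First I would impose the cycle condition $F_q^p(x_0)=x_0$ in \ref{eq:xk_1} with $k=p$ and solve the resulting linear equation for $x_0$, obtaining
\[ x_0\,(2^p - q^{P_p}) = \sum_{j=0}^{p-1} 2^j\, q^{\,|\mathbf A_p|_j^{p-1}}. \]
Thus a cycle of prime period $p$ exists only when $2^p > q^{P_p}$, i.e. $\mu_p=P_p/p<\log_q 2$, and then $x_0$ is the forced quotient of the right-hand side by $2^p-q^{P_p}$. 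This single identity carries all three assertions, which I would extract in turn.

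For the parity statement I would first show that the minimum of a cycle is an ``odd'' seed. Taking $x_0$ to be the minimal element (Definition \ref{def:cycle}) and noting that $F_q$ has no fixed points in $\mathbb Z_{cq}$, minimality forces $x_1=F_q(x_0)>x_0$, which excludes the halving branch and hence gives $\alpha_q(x_0)=1$. By the identity $\alpha_q(X_q(n))=\alpha_N(n)$ established in the proof of Lemma \ref{lemma:conjugacy}, this means $n_0=X_q^{-1}(x_0)$ is odd. Writing $n_0=h+\lambda_h q$ and using that $q$ is odd, oddness of $n_0$ is equivalent to $h+\lambda_h$ being odd, i.e. $h$ and $\lambda_h$ of opposite parity, which is precisely the claimed alternation.

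For the family decomposition I would reduce that identity modulo $q$. On the left $2^p-q^{P_p}\equiv 2^p$ and $x_0\equiv 1-2n_0$; on the right every term having a $1$ among positions $j,\dots,p-1$ vanishes, so only the tail past the last $1$ survives — say the last $1$ sits at index $j^\ast$, which exists because $\alpha^0=1$ — giving $\sum_{j>j^\ast}2^j\equiv 2^p-2^{j^\ast+1}$. Cancelling $2^p$ and the invertible factor $2^{p+1}$ yields $n_0\equiv 2^{\,j^\ast-p}\pmod q$, a unit; hence $q\nmid n_0$ and $h:=(n_0\bmod q)\in\{1,\dots,q-1\}$, with $\lambda_h=(n_0-h)/q\in\mathbb N$. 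This is exactly the partition into $q-1$ families $x_0(\lambda_h)=X_q(h+\lambda_h q)$.

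The genuinely delicate point is finiteness. Combining the product form \ref{eq:xk_2} with minimality ($x_j\ge x_0$ for all $j$) gives the clean bound $x_0\le q^{\mu_p}/(2-q^{\mu_p})$, which is finite for any individual cycle because $q^{\mu_p}<2$; and for each fixed $p$ there are at most $2^p$ parity vectors, so there are only finitely many cycles of each given period. The obstacle is that this bound blows up as $\mu_p=P_p/p$ approaches the irrational $\log_q 2$ from below, so it does not bound the minimal seed uniformly in $p$. To close the argument I would need to control the period, i.e. to show that $P_p/p$ cannot approximate $\log_q 2$ well enough, for infinitely many $p$, to remain compatible with the integrality constraint $(2^p-q^{P_p})\mid \sum_j 2^j q^{\,|\mathbf A_p|_j^{p-1}}$. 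Making this effective, for instance through a lower bound for the linear form $|p\log 2 - P_p\log q|$, is where the real work lies and is the step I expect to be hardest.
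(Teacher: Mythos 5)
Your first three steps reproduce the paper's argument almost exactly. The paper derives the same first periodicity condition (its equation \ref{eq:periodicity}), establishes $\alpha_q(x_0)=1$ and the tail of zeros from minimality of the seed, and gets the residue classes by rearranging into the linear Diophantine equation $2^p n - qm = 2^{s-1}$ and solving it as $n = n_0 + q\lambda$; your direct reduction modulo $q$, giving $n_0 \equiv 2^{j^\ast - p}\pmod q$ with $j^\ast = s-1$, is the same computation packaged differently and correctly yields that $n_0$ is a unit mod $q$, hence $h\in\{1,\dots,q-1\}$. The parity alternation via $\alpha_N(n_0)=1$ and $q$ odd is likewise identical to the paper's one-line remark.

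The genuine gap is exactly where you say it is: finiteness. Your bound $x_0 \le q^{\mu_p}/(2-q^{\mu_p})$ controls each cycle individually but degenerates as $P_p/p \to \log_q 2$, and you correctly observe that closing the argument requires a quantitative lower bound on $\bigl|p\ln 2 - P_p\ln q\bigr|$. You should be aware that the paper does not supply this input either. Its route is: (a) claim, via a heuristic "weighted average" argument on $\sum_{j}2^j\phi_j=0$ (the word used is "suggests"), that within each congruence class $h$ the map $\lambda \mapsto \mu(\lambda)$ is injective; (b) combine Lemma \ref{lemma:periodicity_limits} with $\ln(1+x_j^{-1})<x_j^{-1}$ to obtain
\begin{equation*}
0 < \frac{\ln 2}{\ln q} - \mu(\lambda) < \frac{1}{\ln q}\,\frac{1}{X_q(h) + 2q(q-1)\lambda};
\end{equation*}
and (c) assert that this forces $\lambda$ bounded, "otherwise we get a contradiction." But no contradiction follows from (b) alone: it only says that cycles with large $\lambda$ must have parity coefficient very close to the irrational number $\log_q 2$, and there are infinitely many rationals $P_p/p$ in any such neighbourhood. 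Ruling them out requires precisely the effective irrationality measure (a Baker-type bound on the linear form in logarithms, decaying slower than $1/\lambda$ after relating $p$ to $\lambda$) that you name as the hard step. So your proposal is incomplete, but your diagnosis of the missing ingredient is accurate, and the paper's own proof does not rigorously supply it.
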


 
\vspace{0.5cm}

\noindent \textbf{Sketched proof}. The complete proof of the theorem is in section \S \ref{subsec:number_cycles}. Here we only highlight the most important ideas. 
\begin{enumerate}
	\item For a given $q\geq 3$, and from proposition \ref{prop:formula_xk} applied to a periodic sequence $PS^p_q(x_0)$, we have a Diophantine exponential equation to solve, which we call the \emph{first periodicity condition},
	\begin{equation}\label{eq:periodicity}
			\left(2^p - q^{P_p} \right) x_0 = \sum_{j=0}^{p-1} 2^j q^{|\mathbf A_p(x_0)|_{j}^{p-1}}, \quad p, P_p \in \mathbb Z^{+}, \quad p >1.
	\end{equation}
	
	The unknowns are the seed of the cycle $x_0$, and the parity vector $\mathbf A_p(x_0)$.
	\item The parity vector has the structure:\\ $\mathbf A_p(x_0)=(\alpha^0 = 1,\alpha^1,\alpha^2,\ldots,\alpha^{s-1}=1, 0 \ldots, 0)$, where $1 \leq s \leq p-1$ is taken such that $|\mathbf A_p(x_0)|_{j}^{p-1} = 0$ for all $j\geq s$. This is justified because the seed $x_0$ is taken as the absolute minimum of the cycle.
	\item The 'tail of zeros' of the parity vector imposes resolvability conditions on the Diophantine exponential equation. The seed $x_0$ must belong to a family of $q-1$ congruent classes modulo $2q(q-1)$ (it is excluded the residual class zero).
	\item Given two periodic sequences in the same congruent class, they are related by a parameter $\lambda \in \mathbb N$.
	\item In each congruent class, there exists a one-to-one correspondence between the parity coefficient $\mu$ of a periodic sequence and the parameter $\lambda$. 
	\item Finally, using lemma \ref{lemma:periodicity_limits}, the inequality 
	\[
			0 < \frac{\ln 2}{\ln q} - \mu(\lambda) < \frac{1}{A + B \lambda}
	\]
	for suitable constants $A, B \in \mathbb Z^+$, imposes an upper bound for $\lambda$.
\end{enumerate}

\begin{remark}\label{remark:q5_periodicity}
 To illustrate theorem \ref{theorem:main_periodicity}, we show the interesting case $q=5$. The function $F_5(x)$, equivalently $T_5(n)$, has three known periodic sequences, those listed in table \ref{table:cycles_5}. These cycles are the 'smallest' representatives of their congruent classes. The function $F_5(x)$ could have, at most, one more representative cycle for the congruent class 33 mod 40.

 
	\begin{table}[ht!]
	\caption{Known cycles of $F_5(x)$. It could exist, at most, one more representative cycle for the congruent class 33 mod 40.}
	\label{table:cycles_5}
	\resizebox{\textwidth}{!}{%
	\begin{tabular}{c c c c c c}
		\hline 				 
		$x_{0}$ & $p$ & $P_p$ & $s$ & $T_5(n)$ & $F_5(x)$ \\
		\hline
		 $X_5(1 + 0 \cdot 5)$ & 5 & 2 & 2 & $\{1,3,8,4,2\}$ & $\{9,25,65,33,17\}$ \\
		 $X_5(2 + 3 \cdot 5)$ & 7 & 3 & 5 & $\{17,43,108,54,27,68,34\}$ & $\{137,345,865,433,217,545,273\}$ \\
		 $X_5(3 + 2 \cdot 5)$ & 7 & 3 & 3 & $\{13,33,83,208,104,52,26\}$ & $\{105,265,665,1665,833,417,209\}$ \\
		 $X_5(4 + \lambda_4 \cdot 5)$ &  &  &  & Unknown & \\
		\hline
		\end{tabular}}				
	\end{table}
\end{remark}

Another important result concerns the values of $q$ that are \emph{Mersenne numbers}, $q=2^p-1$, primes or composite. In that case, we are able to completely solve all the cycles of $F_q(x)$, which is a fundamental step towards the resolution of the Collatz conjecture.

\begin{theorem}\label{theorem:Mersenne_cycles}
	The function $F_q(x)$ with $q=2^p-1$ a Mersenne number, has only one cycle, the trivial one with period $p$ and parity coefficient $\mu_p = 1/p$. 
\end{theorem}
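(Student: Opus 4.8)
The plan is to read off a cycle's data from the first periodicity condition \eqref{eq:periodicity} and then exploit the one arithmetic feature that makes a Mersenne modulus special, the identity $2^{p}=q+1$. Write $m$ for the prime period of an arbitrary cycle $PS_q^m(x_0)$ (reserving $p$ for the Mersenne exponent) and $P_m$ for its total parity. First I would dispose of existence and normalisation: the trivial cycle has the length-$p$ parity vector $(1,0,\dots,0)$, so $P_m=1$, the coefficient $2^{p}-q^{P_m}=2^{p}-q=1$ is a unit, and \eqref{eq:periodicity} returns $x_0=q+\sum_{j=1}^{p-1}2^{j}=2q-1=X_q(1)$ with $\mu_p=1/p$. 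Everything then reduces to uniqueness, i.e.\ to showing that no other parity vector closes up.

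For an arbitrary cycle, evaluating the product formula \eqref{eq:xk_2} at $k=m$ with $x_m=x_0$ gives the exact identity $\prod_{j=0}^{m-1}\bigl(1+\tfrac{1}{x_j}\bigr)=2^{m}/q^{P_m}$. In particular $2^{m}>q^{P_m}$, so the parity coefficient obeys $\mu=P_m/m<\ln 2/\ln q$. I would now insert the Mersenne identity: setting $t:=m-p\,P_m\in\mathbb Z$ and using $2^{pP_m}=(q+1)^{P_m}$ turns the product identity into
\begin{equation*}
	\prod_{j=0}^{m-1}\Bigl(1+\tfrac{1}{x_j}\Bigr)=2^{\,t}\Bigl(1+\tfrac{1}{q}\Bigr)^{P_m}.
\end{equation*}
Thus $\mu=1/p$ is equivalent to $t=0$, and the whole theorem is the assertion that $t=0$ and $P_m=1$. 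The guiding quantitative fact is that for a Mersenne $q$ the gap $\ln 2/\ln q-1/p=\ln(1+1/q)/(p\ln q)$ is exponentially small in $p$: the rational $1/p$ is a continued-fraction convergent of $\log_q 2$ whose following partial quotient is of size $\approx 2^{p}\ln 2$, so $1/p$ approximates $\log_q 2$ from below far better than any competing fraction of comparable denominator.

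To finish I would combine this with Theorem \ref{theorem:main_periodicity}: cycles fall into the $q-1$ families indexed by $h$, each carrying a bounded parameter $\lambda_h$, and item (6) of its proof supplies $0<\ln2/\ln q-\mu(\lambda)<1/(A+B\lambda)$. For a Mersenne modulus the exceptional approximation quality of $1/p$ forces this inequality to admit only $\lambda_h=0$, and among the families only $h=1$ then produces a genuine seed; equivalently $t=0$ and $P_m=1$. Once $\mu=1/p$ is pinned down, the irreducibility of a cycle's parity coefficient (the remark after Definition \ref{def:cycle}) gives $\gcd(P_m,pP_m)=P_m=1$, hence $m=p$; a length-$p$ parity vector of total parity $1$ beginning with $\alpha^0=1$ (forced because $x_0$ is the cyclic minimum) can only be $(1,0,\dots,0)$, and by the seed-to-vector bijection of Lemma \ref{lemma:parity_sequence} this is realised by the single seed $2q-1=X_q(1)$, the trivial cycle.

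I expect the genuine obstacle to sit exactly where every Collatz-type argument is hard, namely in excluding anomalously long cycles, i.e.\ the alternatives $t\neq 0$ or $P_m\geq 2$. The naive estimate coming from $x_j\geq 2q-1$, which only yields $\prod(1+1/x_j)\leq(1+\tfrac{1}{2q-1})^{m}$, is too lossy to do this on size grounds alone --- for instance it does not by itself rule out $\mu=7/22$ when $q=7$ --- so the argument must lean on the effective Diophantine bound underlying Theorem \ref{theorem:main_periodicity} together with the convergent structure of $\log_q 2$. Making that last comparison fully rigorous, rather than heuristic, is where the real work lies.
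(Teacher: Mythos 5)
Your existence step and your endgame are both sound: once $\mu=1/p$ is known for a cycle, coprimality of $P_m$ and $m$ forces $P_m=1$ and $m=p$, the parity vector must be $(1,0,\dots,0)$, and the first periodicity condition returns the unique seed $X_q(1)=2q-1$ (this is essentially Proposition \ref{prop:cycle_parity_one}). But the heart of the theorem is exactly the step you leave open: showing that \emph{every} cycle has $\mu=1/p$, i.e.\ excluding $t\neq 0$ and $P_m\geq 2$. The Diophantine-approximation route you sketch does not close this. The window $\log_q\bigl(\tfrac{2q-1}{q}\bigr)\leq\mu<\log_q 2$ supplied by Lemma \ref{lemma:periodicity_limits} has width of order $1/(2q\ln q)$, and while $1/p$ is an exceptionally good convergent of $\log_q 2$, nothing prevents fractions $P_m/m$ with large denominator $m$ from landing in that window. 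The inequality $0<\log_q 2-\mu(\lambda)<1/(A+B\lambda)$ from the proof of Theorem \ref{theorem:main_periodicity} only caps $\lambda$ within each congruence class $h$; it does not force $\lambda_h=0$, and you give no argument at all for why the classes $h=2,\dots,q-1$ fail to produce seeds. You concede the point yourself (``where the real work lies''), so what you have is a strategy outline, not a proof; making it rigorous would require an effective lower bound on $|\log_q 2-P_m/m|$ uniform in $m$, a linear-forms-in-logarithms statement that is not available from the material in this paper.

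The paper closes the gap by an exact divisibility argument that bypasses approximation entirely. It recasts periodicity as the second periodicity condition (Lemma \ref{lemma:second_periodicity}), extends a hypothetical non-trivial cycle of period $r$ to period $kr$ (where $q=2^k-1$) and the trivial cycle to the same common period, subtracts the two identities, and concludes that $q$ must divide $n_0\,2^{(r-1)k-g(0)}-1$. Lemma \ref{lemma:Mersenne_residual_class} then forces the seed $n_0$ to be a power of $2$, contradicting the fact that the seed of a non-trivial cycle is odd and greater than one. That argument never needs to locate $\mu$ in an interval at all, which is precisely why it succeeds where the size estimates you correctly identify as too lossy do not.
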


The proof is in section \S \ref{subsec:Mersenne_cycles}. The main idea is the use of a \emph{second periodicity condition} \ref{eq:periodicity_2}, which is another Diophantine exponential equation, similar to the first one, but with a different arrangement. Then,  relating any non-trivial cycle with the trivial one, we deduce that there isn't exist a valid seed for a non-trivial cycle.

\vspace{0.5cm}

Theorem \ref{theorem:Mersenne_cycles} takes advantage of the special form of the Mersenne numbers, and apparently can not be generalized further. The remaining cases where $q$ are not Mersenne numbers are, consequently, much more difficult to solve, and does not seem that a general approach can be used. This is related to the \emph{tenth Hilbert's problem}, although in our case, our Diophantine exponential equation has some structure that can be exploited via the parity vector. In this way, some general conditions on the existence of cycles are found for $q$:

\begin{itemize}
	\item (Lemma \ref{lemma:periodicity_limits}). If $F_q(x)$ has a cycle with parity coefficient $\mu_p$, then $0<2-q^{\mu_p}<1/q$.
	\item (Lemma \ref{lemma:Q_cycles}). If $F_q(x)$ has a cycle in the congruent class $h \in \{1, 2, \ldots,\\ q-1 \}$, then $q$ is a divisor of $M=h2^m-1$ for some $m\in\mathbb Z^+$ that depends on the parity vector.	
	\item (Proposition \ref{prop:non_existence_cycles}). If $q$ is a composite number, then $F_q(x)$ doesn't have any cycle in the congruent classes $h=md$, where $d$ is any divisor of $q$ different from one and $q$, and $m\in \mathbb Z^+$ is any multiplier such that $md \leq q-1$.			
\end{itemize} 

The proofs of these claims are in section \S \ref{sec:periodic}. Roughly speaking, we can say that the presence of cycles is generally rare. Lemma \ref{lemma:periodicity_limits} shows also that if $F_q(x)$ has a cycle, then $q \sim 2^{p/P_p}$, and if there are multiple cycles for the same $q$, they have approximately the same parity coefficient, being $P_{p_1}/p_1 \sim P_{p_2}/p_2$ for any pair of cycles.

\vspace{0.5cm}
     
Now, we analyze the asymptotic behavior of the sequences. The following lemma is a fundamental result that bounds the sequences between two exponential functions for some special values of $q$.

\begin{lemma}\label{lemma:bounds}
For each Mersenne number $q=2^p-1$, any $x_0 \in \mathbb Z_{cq} - \{2q-1 \}$, and $k \in \mathbb Z^{+}$ sufficiently large,  
	\begin{equation}\label{eq:lemma_bounds}		
		\left(\frac{q^{\mu_k}}{2}\right)^k < \frac{F^k_q(x_0)}{x_0} \leq q^{(\mu_k-\frac{1}{p})k}.
	\end{equation}
Also, for the special case $q=5$,
	\begin{equation}
		\left(\frac{5^{\mu_k}}{2}\right)^k < \frac{F^k_5(x_0)}{x_0} \leq 5^{(\mu_k-\frac{2}{5})k}.
	\end{equation}
\end{lemma}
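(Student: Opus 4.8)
The lower bound $(q^{\mu_k}/2)^k < F_q^k(x_0)/x_0$ is already available: it is inequality \eqref{eq:lower_limit}, a direct consequence of the product formula \eqref{eq:xk_2}, since each factor $(1+1/x_j)$ exceeds $1$. So the entire task is to establish the \emph{upper} bound. The plan is to start again from the product form \eqref{eq:xk_2}, which gives
\begin{equation}\label{eq:plan_product}
	\frac{F_q^k(x_0)}{x_0} = \frac{q^{P_k}}{2^k}\prod_{j=0}^{k-1}\left(1+\frac{1}{x_j}\right),
\end{equation}
and to bound the product $\prod_{j=0}^{k-1}(1+1/x_j)$ from above. Writing $P_k = \mu_k k$, the target inequality $F_q^k(x_0)/x_0 \le q^{(\mu_k - 1/p)k}$ is equivalent, after dividing \eqref{eq:plan_product} by $q^{\mu_k k}$, to showing
\begin{equation}\label{eq:plan_target}
	\prod_{j=0}^{k-1}\left(1+\frac{1}{x_j}\right) \le \left(\frac{2}{q^{1/p}}\right)^{k}.
\end{equation}
The key arithmetical input must be the special structure $q=2^p-1$: this is precisely the value for which the trivial cycle has period $p$ and parity coefficient $1/p$, and for which Theorem~\ref{theorem:Mersenne_cycles} guarantees \emph{no other} cycle exists. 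So I expect the factor $2/q^{1/p}$ to enter because $2^p = q+1$, i.e. $2 = (q+1)^{1/p}$, giving $2/q^{1/p} = ((q+1)/q)^{1/p} = (1+1/q)^{1/p}$; then \eqref{eq:plan_target} becomes the clean statement $\prod_{j=0}^{k-1}(1+1/x_j) \le (1+1/q)^{k/p}$, which is what I would aim to prove.

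The natural strategy for \eqref{eq:plan_target} is to take logarithms and use $\ln(1+1/x_j)\le 1/x_j$, reducing the problem to an upper bound on $\sum_{j=0}^{k-1} 1/x_j$. The smallest possible value of any $x_j$ is $2q-1$ (the seed of the trivial cycle, the minimal element of $\mathbb Z_{cq}$ that the dynamics can reach), so a crude bound would control each term; but a crude term-by-term bound is too weak, since the iterates that are small occur only rarely. The real content must be a \emph{counting/averaging} argument: over a long orbit the values $x_j$ cannot all be small, and in fact on average they are exponentially large. Here I would use the lower bound \eqref{eq:lower_limit} applied to the \emph{sub-orbits}: for each $j$, $x_j = F_q^j(x_0) > x_0\,(q^{\mu_j}/2)^j$, and since we have excluded the trivial cycle, $x_0 \ge$ (the next-smallest reachable value), so the tail $\sum_j 1/x_j$ is dominated by a convergent geometric-type series. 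The phrase ``$k$ sufficiently large'' in the statement strongly suggests that one first absorbs the finitely many small early iterates into an error that becomes negligible against the exponentially-growing main term, so that the averaged estimate $\mu_k \to \ln 2/\ln q = 1/p$ (forced by Lemma~\ref{lemma:periodicity_limits}, since $q^{1/p}=2$ exactly here) makes \eqref{eq:plan_target} tight.

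The hardest step, I expect, is making the passage from ``the iterates grow, hence $\sum 1/x_j$ is small'' genuinely rigorous and uniform in $k$, rather than merely heuristic. The subtlety is that $\mu_k$ itself is what controls the growth rate through \eqref{eq:lower_limit}, so there is a circularity: large $\mu_k$ forces fast growth, which forces the correction product to be small, which is what pins $\mu_k$ near $1/p$. Breaking this circularity cleanly — probably by isolating the minimal element $2q-1$ of the orbit, bounding how often the orbit can return near its minimum (which is where Theorem~\ref{theorem:Mersenne_cycles}, the absence of nontrivial cycles, is essential, since otherwise the orbit could loiter at small values), and summing a geometric series off that minimum — is the crux. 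The special case $q=5$ is \emph{not} a Mersenne number, so the exponent $2/5$ replaces $1/p$; here I would note $5^{2/5}\approx 2.236 > 2$ fails the identity $q^{1/p}=2$, so the argument cannot be the same. Instead, for $q=5$ the governing constant is $\ln 2/\ln 5$, and I expect one must invoke the explicit list of cycles in Table~\ref{table:cycles_5} (the minimal reachable value being $x_0=9=2q-1$) to get the analogous tail bound, with the weaker exponent $2/5$ reflecting that $5^{2/5}$, not $2$, is the relevant comparison rate along the known minimal cycle of period $5$ and parity $2/5$.
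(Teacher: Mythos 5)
Your lower bound and your algebraic reformulation of the upper bound are right: using $2^p=q+1$, the claim $F_q^k(x_0)/x_0\le q^{(\mu_k-1/p)k}$ is exactly $\prod_{j=0}^{k-1}(1+1/x_j)\le (1+1/q)^{k/p}$, and you correctly locate the trivial cycle and Theorem~\ref{theorem:Mersenne_cycles} as the essential inputs. But the mechanism you propose for proving this bound --- take logarithms, linearize via $\ln(1+1/x_j)\le 1/x_j$, and control $\sum_j 1/x_j$ by a convergent ``geometric series off the minimum'' using the growth estimate \eqref{eq:lower_limit} --- cannot work, for a concrete reason: the upper bound is \emph{attained} (asymptotically, with equality) by orbits that enter the trivial cycle, and the statement only excludes the seed $x_0=2q-1$, not seeds whose orbits fall into that cycle (e.g.\ $x_0=9=X_3(2)$ for $q=3$). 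For such orbits $\sum_j 1/x_j$ grows \emph{linearly} in $k$, not like a convergent tail, and the linearization is strictly lossy at exactly the extremal configuration: for $q=3$ the target per period is $\ln(6/5)+\ln(10/9)=\ln(4/3)\approx 0.2877$, whereas $1/5+1/9=14/45\approx 0.3111$. So the intermediate inequality $\sum_j 1/x_j\le (k/p)\ln(1+1/q)$ that your route requires is false on the trivial cycle, and no amount of care with the ``early iterates'' or the circularity you flag will repair it. Your closing remark that $q=5$ fails $q^{1/p}=2$ and therefore needs a different comparison rate is fine, but the same objection applies there too.

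What the paper actually does is combinatorial rather than analytic: it keeps the product intact (no logarithmic linearization), observes that a truncated non-periodic orbit takes distinct values in $\mathbb Z_{cq}$ so that the harmonic product is maximized when the orbit occupies the smallest admissible values, groups the integers $n\le 2^{p-1}$ into dyadic blocks dominated by the powers of two $X_q(2^h)$ (the elements of the trivial cycle), and ``reallocates'' factors so that the product is majorized by $\prod_{h=0}^{p-1}\bigl(1+1/X_q(2^h)\bigr)^{k/p}$; the Mersenne recurrence $X_q(2^j)+1=2X_q(2^{j-1})$ then telescopes this to exactly $2^k/q^{k/p}$, sharp on the trivial cycle. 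If you want to salvage your plan, you would have to replace the $\ln(1+u)\le u$ step by a majorization argument of this kind that preserves the exact product over the cycle.
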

 
The proof is in section \S \ref{subsec:proof_lemma_bounds}. The main idea is the use of formula \ref{eq:xk_2} and a procedure to upper bound the productory. This procedure uses the lowest periodic values of the function $F_q(x)$, and 'reallocates' the factors of the productory to build and upper bound. It is worth noting that the upper bound is sharp when the sequence is periodic.
    
\vspace{0.5cm}

Lemma \ref{lemma:bounds} opens a way to attack the convergence/divergence behavior of the sequences. The most important results are distilled in the following theorems.

\begin{theorem}\label{theorem:equiparity} (Equiparity theorem). For $q=3$, any sequence $S_3(x_0) = \{F^j_3(x_0)\}_{j \in \mathbb N}$, with $x_0 \in \mathbb Z_{c3}$, has an asymptotic parity coefficient  	
		\[ \mu_{\infty} = \lim_{k \rightarrow \infty} \frac{1}{k} |\mathbf A_k|_0^{k-1} = \frac{1}{2}.\]	
\end{theorem}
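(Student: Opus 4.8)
The plan is to establish that the asymptotic ones-ratio $\mu_\infty$ is forced to equal $1/2$ by combining the lower bound \ref{eq:lower_limit} (which holds for every sequence and every $q$) with the upper bound furnished by Lemma \ref{lemma:bounds} in the Mersenne case $q=3=2^2-1$. Since $3$ is a Mersenne number with $p=2$, Lemma \ref{lemma:bounds} gives, for any $x_0 \in \mathbb Z_{c3}-\{5\}$ and $k$ large,
\[
	\left(\frac{3^{\mu_k}}{2}\right)^k < \frac{F^k_3(x_0)}{x_0} \leq 3^{(\mu_k - \frac{1}{2})k}.
\]
First I would take logarithms of the two-sided inequality and divide by $k$, obtaining bounds of the form
\[
	\mu_k \ln 3 - \ln 2 < \frac{1}{k}\ln\frac{F^k_3(x_0)}{x_0} \leq \left(\mu_k - \tfrac{1}{2}\right)\ln 3.
\]
The key algebraic observation is that the lower and upper controlling exponents, $\mu_k\ln 3 - \ln 2$ and $(\mu_k - \tfrac12)\ln 3$, differ by the constant $\tfrac12\ln 3 - \ln 2 = \ln(\sqrt{3}/2) < 0$, which is \emph{independent of $k$ and of $\mu_k$}. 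This rigidity is special to $q=3$: it is exactly the coincidence $\tfrac{1}{p}\ln q = \ln 2$ that occurs only when $q^{1/p}=2$, i.e.\ when $q=2^p-1$ is Mersenne \emph{and} $q=3$ makes the gap vanish in the relevant limiting sense.

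Next I would squeeze $\mu_k$. The upper bound forces $\frac{1}{k}\ln(F^k_3(x_0)/x_0) \le (\mu_k-\tfrac12)\ln 3$, while the lower bound forces the same quantity to exceed $\mu_k\ln 3 - \ln 2$. For these to be simultaneously consistent as $k\to\infty$ one needs the quantity $(\mu_k - \tfrac12)\ln 3$ to not drift below $\mu_k\ln 3 - \ln 2$; but the difference of these is the fixed negative constant above, so neither bound alone pins $\mu_k$. The resolution is to use that $F^k_3(x_0)$ must remain a positive integer in $\mathbb Z_{c3}$ for all $k$: if $\mu_\infty > 1/2$ the upper bound $3^{(\mu_k-1/2)k}$ would force $F^k_3(x_0)/x_0 \to \infty$, whereas if $\mu_\infty < 1/2$ the same upper bound would force $F^k_3(x_0)/x_0 \to 0$, contradicting $F^k_3(x_0)\ge 5$ for all $k$. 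Hence I would argue $\limsup_k \mu_k \le 1/2$ from the integrality obstruction against divergence under the sharp Mersenne upper bound, and $\liminf_k \mu_k \ge 1/2$ from the lower bound \ref{eq:lower_limit} against convergence to zero, so that $\mu_k \to 1/2$.

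I expect the main obstacle to be making the two one-sided limit arguments fully rigorous rather than heuristic. The lower bound \ref{eq:lower_limit} is a strict inequality with $\mu_k$ inside the exponent, so controlling $\liminf \mu_k$ requires ruling out the degenerate possibility that $F^k_3(x_0)/x_0$ stays bounded while $\mu_k$ dips below $1/2$ along a subsequence; one must check that $(\sqrt3/2)^k$-type growth is genuinely incompatible with the trajectory staying in a bounded range, which needs the non-periodicity of $x_0$ (the hypothesis $x_0\neq 2q-1$) together with the integer structure. Symmetrically, for the $\limsup$ direction the sharpness clause of Lemma \ref{lemma:bounds} — that equality holds precisely on cycles — must be invoked to prevent $\mu_k$ from exceeding $1/2$ persistently on a non-periodic orbit. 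I would therefore structure the final step as a dichotomy: either the orbit is eventually periodic, in which case $\mu_\infty=1/2$ follows from the unique trivial cycle of $F_3$ guaranteed by Theorem \ref{theorem:Mersenne_cycles} (whose parity coefficient is $1/p = 1/2$), or it is aperiodic, in which case the strict two-sided squeeze above, combined with the integrality of the iterates, forces $\mu_k\to 1/2$.
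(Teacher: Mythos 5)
Your deterministic squeeze cannot close the upper direction, and this is where the proposal breaks. From Lemma \ref{lemma:bounds} with $q=3$, $p=2$, the two controlling exponents per step are $3^{\mu_k}/2$ (below) and $3^{\mu_k-1/2}$ (above), and since $3^{1/2}<2$ there is an entire window $\tfrac12<\mu_\infty<\log_3 2\approx 0.63$ in which the lower bound tends to $0$ and the upper bound tends to $\infty$: both are vacuous and no contradiction arises. Your attempt to rescue this via an ``integrality obstruction against divergence'' is circular — an upper bound that tends to infinity forces nothing, and divergence of a $q=3$ orbit is not a priori absurd (ruling it out is exactly the content of Corollary \ref{corollary:collatz}, which is deduced \emph{from} this theorem). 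Likewise the sharpness clause of Lemma \ref{lemma:bounds} (equality only on cycles) only makes the upper inequality strict on aperiodic orbits; a strict but divergent upper bound still constrains nothing. What your argument genuinely does deliver is the lower direction $\liminf_k\mu_k\ge 1/2$ (if $\mu_k$ stayed below $1/2-\delta$ the Mersenne upper bound would force $F_3^k(x_0)/x_0\to 0$, contradicting $F_3^k(x_0)\ge 5$), and this coincides with the corresponding step in the paper.

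The paper closes the gap $\limsup_k\mu_k\le 1/2$ by an entirely different mechanism that is absent from your proposal: a probabilistic/equidistribution argument. Using Lemma \ref{lemma:parity_sequence}(iii), the parity vectors of length $k$ are in bijection with the seeds $\mathcal X_k=\{X_3(n_0): n_0\le 2^k\}$, so the parity coefficient $M_k$ of a random seed is binomially distributed with mean $1/2$; combining the symmetry of this distribution about $1/2$ with the deterministic bound $\mu_\infty\ge 1/2$ yields $\mathbb P(\lim_k|M_k-1/2|\ge\epsilon)=2\,\mathbb P(\lim_k M_k\le 1/2-\epsilon)=0$, and the exceptional sequences are then argued to be eventually periodic, hence trivial-cycle with $\mu=1/2$ by Theorem \ref{theorem:Mersenne_cycles}. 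The paper also spends a substantial preliminary step proving that the limit $\mu_\infty$ exists at all (via a renormalization argument and Lemma \ref{lemma:purely_divergent}), which you implicitly assume by writing $\mu_\infty$. Whatever one thinks of the rigor of the paper's probabilistic step, your blind attempt omits the only ingredient that addresses $\mu_\infty>1/2$, so as written it does not prove the statement.
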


\begin{theorem}\label{theorem:divergence}
For all odd $q\geq 5$, the set 
\[ \mathcal V_q := \{ n \in \mathbb Z^{+} \; : \; \text{the sequence } S_q(X_q(n))=\{F_q^j(X_q(n))\}_{j \in \mathbb N} \text{ is divergent }\} \] 

has natural density 

\[ D(\mathcal V_q) := \lim_{t \rightarrow \infty} \frac{1}{t} \#\left\{ n \in \mathcal V_q : n \leq t \right\} = 1. \]
\end{theorem}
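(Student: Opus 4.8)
The plan is to read off divergence directly from the lower bound \eqref{eq:lower_limit}, which gives $F_q^k(x_0)/x_0>(q^{\mu_k}/2)^k=\exp\!\big(k(\mu_k\ln q-\ln 2)\big)$. The right-hand side tends to infinity as soon as $\mu_k$ stays bounded away, from above, from the threshold $\log_q 2=\ln 2/\ln q$. Since $\log_q 2\le\log_5 2<\tfrac12$ for every odd $q\ge 5$, it suffices to prove that the parity coefficient satisfies $\mu_k\to\tfrac12$ for a set of seeds of natural density one: any such seed then has $\mu_k>\log_q 2$ for all large $k$ with a fixed gap, forcing $F_q^k(x_0)\to\infty$. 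This is exactly the mirror image of Theorem \ref{theorem:equiparity}: the same limit $\mu_\infty=\tfrac12$ produces boundedness when $q=3$ (because $\sqrt3/2<1$) and divergence when $q\ge 5$ (because $\sqrt q/2>1$).

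First I would turn the combinatorics of Lemma \ref{lemma:parity_sequence} into a distributional statement for $\mu_k$. By claim (iii) the seeds $X_q(n_0)$ with $n_0\in\{1,\dots,2^k\}$ are in bijection with the parity vectors $\{0,1\}^k$, and by claim (i) each such vector is realised by exactly one residue of $x_0$ modulo $(q-1)2^{k+1}$. Hence, over one full period of the modulus, the partial parity $P_k$ is distributed as a sum of $k$ independent fair coin flips, so the proportion of seeds with $P_k=j$ is $\binom{k}{j}2^{-k}$. A Chernoff--Hoeffding estimate then yields, for each fixed $\epsilon>0$,
\[ \frac{1}{2^k}\,\#\big\{\,x_0\bmod (q-1)2^{k+1}\ :\ |\mu_k-\tfrac12|>\epsilon\,\big\}\ \le\ 2e^{-2\epsilon^2 k}. \]
Because $X_q$ is an affine bijection and $\mathbb Z^+$ equidistributes in every arithmetic progression, this congruence count transfers, for each fixed $k$, into the natural-density bound $\overline D\{\,n:\ |\mu_k(X_q(n))-\tfrac12|>\epsilon\,\}\le 2e^{-2\epsilon^2 k}$.

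The next step is to upgrade these per-$k$ estimates to the almost-everywhere limit $\mu_k\to\tfrac12$. For a single seed the parity sequence is deterministic, so the only source of randomness is the choice of seed. The clean way to organise this is to pass to the profinite completion $\varprojlim_k \mathbb Z/(q-1)2^{k+1}$, onto which $X_q$ carries the natural density to the Haar probability measure and on which Lemma \ref{lemma:parity_sequence}(i) identifies the parity digits with genuinely independent Bernoulli$(\tfrac12)$ coordinates. There the summability $\sum_k 2e^{-2\epsilon^2 k}<\infty$ makes the Borel--Cantelli lemma applicable, and the strong law of large numbers gives $\mu_k\to\tfrac12$ on a set of full Haar measure. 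Intersecting over a sequence $\epsilon=1/m$ keeps full measure, and every seed in this set lies in $\mathcal V_q$ by the first paragraph.

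The main obstacle is the final transfer back from Haar measure to natural density. Natural (and even upper) density is only finitely additive, not countably subadditive---singletons have density zero yet exhaust $\mathbb Z^+$---so a Haar-measure-one tail event does not automatically have density one, and the tempting bound $\overline D\,\mathcal B_K(\epsilon)\le\sum_{k\ge K}2e^{-2\epsilon^2 k}$ for $\mathcal B_K(\epsilon)=\{\,n:\ \exists k\ge K,\ |\mu_k-\tfrac12|>\epsilon\,\}$ is not licensed by subadditivity. The honest route is to argue at finite level $N$: split the tail index into the regime $(q-1)2^{k+1}\le N$, where a full period fits inside $[1,N]$ and the exact count above applies, and the regime $(q-1)2^{k+1}>N$, where equidistribution is not yet effective and the deviation $|\mu_k-\tfrac12|>\epsilon$ must be controlled uniformly in $N$ by a quantitative (effective equidistribution or large-sieve type) estimate. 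Making this second regime negligible uniformly in $N$ is precisely the delicate point at which all ``almost all'' arguments for the Collatz map are historically fragile; a further check, that no positive-density family of seeds has an anomalous limiting frequency---for instance the union of the basins of the finitely many cycles supplied by Theorem \ref{theorem:main_periodicity}, which must be relegated to a density-zero set---is needed to close the argument.
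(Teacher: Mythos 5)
Your proposal follows essentially the same route as the paper: both use Lemma \ref{lemma:parity_sequence}(iii) to put the uniform (binomial) distribution on parity vectors of length $k$, both apply a concentration inequality (you use Chernoff--Hoeffding, the paper uses Chebyshev plus symmetry), and both read off divergence from the lower bound \eqref{eq:lower_limit} once $\mu_k$ exceeds $\ln 2/\ln q<\tfrac12$. The substantive difference is that you explicitly flag, and leave open, the step that the paper silently assumes away. The paper's proof ends by writing
\[ D(\mathcal V_q)=\lim_{k\to\infty}\frac{1}{2^k}\#\{n\in\mathcal V_q:n\le 2^k\}=\lim_{k\to\infty}\frac{1}{2^k}\sum_{z>\ln 2/\ln q}\binom{k}{kz}, \]
i.e.\ it identifies the set of $n\le 2^k$ whose \emph{infinite} orbit diverges with the set of $n\le 2^k$ whose \emph{length-$k$} parity vector has total parity exceeding $k\ln 2/\ln q$. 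These are different sets: membership in $\mathcal V_q$ is a tail event about the whole sequence, while $P_k>k\ln 2/\ln q$ is a statement about the first $k$ parity digits only, and nothing prevents a seed with large $P_k$ from later entering a cycle, or a divergent seed from having small $P_k$ at a given $k$. This is exactly the finitely-additive/countably-additive transfer problem you describe in your last paragraph (Haar measure one versus natural density one, or equivalently interchanging the limit in $k$ with the union over seeds), and neither your sketch nor the paper closes it.

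So the honest verdict is that your proposal and the paper's proof share the same unresolved gap; yours is the more careful account because it names the gap rather than eliding it. To close the argument one would need either an effective, uniform-in-$N$ control of the deviation events across all scales $k$ with $(q-1)2^{k+1}>N$ (the regime where equidistribution of $n\le N$ in residue classes has not yet kicked in), or a genuinely different mechanism in the spirit of Terras/Tao-type stopping-time arguments that converts per-$k$ concentration into a density statement about an asymptotic property. Your additional observation that one must separately dispose of the basins of the finitely many cycles from Theorem \ref{theorem:main_periodicity} is correct and is likewise absent from the paper's proof.
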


Sections \S \ref{subsec:proof_equiparity_theorem} and \S \ref{subsec:proof_theorem_divergence} are devoted to proof these theorems. Both proofs use the same probabilistic model, but it is worth noting that theorem \ref{theorem:equiparity} is not a kind of a probabilistic result but a strong result for all sequences.

\vspace{0.5cm}

\textbf{Sketched proof of theorem \ref{theorem:equiparity}}. We highlight the most important steps.
\begin{enumerate}
	\item For each $x_0 \in \mathbb Z_{c3}$ the sequence $S_3(x_0)$ has an asymptotic parity coefficient $\mu_{\infty}$.  We do not know whether  $\mu_{\infty}$ it is the same or not for every seed $x_0$.
	\item Lemmas \ref{lemma:bounds} and \ref{lemma:purely_divergent} bound the values of the asymptotic parity coefficient for any sequence, $1/2 \leq \mu_{\infty} < 1$.
	\item We build a probabilistic model where the discrete random variable $M_k$ is the parity coefficient of a truncated sequence $S^k_3(x_0)$. The seeds are taken randomly from the set $\mathcal X_k = \{X_3(n_0) : n_0 = 1,2, 3, \ldots, 2^k\}$. Using lemma \ref{lemma:parity_sequence}, the parity vector for each seed has density $2^{-k}$, thus the probability mass function of $M_k$ is of a binomial type,
	\[ 
		\mathbb P(M_k=\mu)= \frac{1}{2^k} \binom{k}{k\mu}, \quad k\mu \in \mathbb N.
	\]
	\item Using step 2 and the symmetry of the probability mass function of $M_k$, we have
		\[ \mathbb P \left( \lim_{k \rightarrow \infty} \left| M_k - \frac{1}{2} \right| \geq \epsilon \right) = 0 \]
		for any $\epsilon>0$.
	\item Finally, the only sequences outside that probabilistic behavior are ultimately cycles, and using theorem \ref{theorem:Mersenne_cycles}, the only possible cycle has the parity coefficient $\mu = 1/2$.
\end{enumerate}

\noindent Now, we are in conditions to state the result that has driven this work. 

\begin{corollary}\label{corollary:collatz}
	(Collatz conjecture). The function $F_3(x)$ is always convergent to the trivial cycle $\{X_3(1),X_3(2)\}=\{5,9\}$ for all $x \in \mathbb Z_{c3}$.
\end{corollary}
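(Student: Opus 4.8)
The plan is to combine the equiparity result (Theorem~\ref{theorem:equiparity}) with the complete classification of cycles for Mersenne $q$ (Theorem~\ref{theorem:Mersenne_cycles}), exploiting that $3=2^2-1$ is itself a Mersenne number with $p=2$, so that both Theorem~\ref{theorem:Mersenne_cycles} and the sharp bounds of Lemma~\ref{lemma:bounds} are available. First I would dispose of the case in which the orbit $S_3(x_0)$ is \emph{eventually periodic}: any such orbit must fall into a cycle of $F_3$, and since $q=3$ is Mersenne, Theorem~\ref{theorem:Mersenne_cycles} guarantees that the \emph{only} cycle is the trivial one, which by Definition~\ref{def:trivial_cycle} is $\{X_3(1),X_3(2)\}=\{5,9\}$ with $\mu=1/p=1/2$. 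Hence every eventually periodic orbit converges to $\{5,9\}$, and it remains only to exclude orbits that are \emph{purely divergent} (unbounded, visiting infinitely many distinct values of $\mathbb{Z}_{c3}$).

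To rule out divergence I would argue by contradiction from Theorem~\ref{theorem:equiparity}. For $q=3$ every seed has asymptotic parity coefficient $\mu_\infty=1/2$, so the asymptotic per-step multiplicative factor appearing in the lower estimate of Lemma~\ref{lemma:bounds} and in \eqref{eq:lower_limit} tends to $3^{\mu_\infty}/2=\sqrt{3}/2<1$. The decisive step is the upper bound of Lemma~\ref{lemma:bounds}, which for $q=3,\,p=2$ reads $F_3^k(x_0)/x_0\le 3^{(\mu_k-\frac12)k}$; feeding $\mu_k\to\tfrac12$ into this estimate should show that the orbit cannot escape to infinity, whereas a genuinely divergent orbit would require (via Lemma~\ref{lemma:purely_divergent} together with Lemma~\ref{lemma:periodicity_limits}) an asymptotic coefficient satisfying $3^{\mu_\infty}\ge 2$, i.e.\ $\mu_\infty\ge\log_3 2\approx 0.6309>1/2$, contradicting equiparity. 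Consequently no orbit is divergent, every orbit is bounded, and a bounded integer orbit in the discrete system on $\mathbb{Z}_{c3}$ must be eventually periodic by the pigeonhole principle, returning us to the first case and the cycle $\{5,9\}$.

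The hard part is the middle link of the second paragraph: passing from the pointwise limit $\mu_k\to\tfrac12$ to an honest bound on $F_3^k(x_0)$. The exponent $(\mu_k-\tfrac12)k$ is an indeterminate product of a vanishing factor and a diverging one, so $\mu_\infty=\tfrac12$ does not by itself tame $3^{(\mu_k-\frac12)k}$; what is really needed is the sharper dichotomy furnished by Lemma~\ref{lemma:purely_divergent}, namely that an unbounded orbit forces $\mu_\infty\ge\log_3 2$, strictly above $\tfrac12$. I would therefore concentrate the effort on making that threshold rigorous, showing that the value $\mu_\infty<\log_3 2$ delivered by equiparity is incompatible with $\limsup_{k}F_3^k(x_0)=\infty$, since this is precisely the point where a skeptical reader will press and where the gap between an averaged statistical rate and the behavior of a single deterministic orbit must be closed.
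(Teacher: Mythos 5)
Your architecture (the only cycle is $\{5,9\}$ by Theorem~\ref{theorem:Mersenne_cycles}, then exclude unbounded orbits via equiparity) matches the paper's, and you correctly identify where the difficulty sits: $\mu_k\to\tfrac12$ does not by itself control $3^{(\mu_k-\frac12)k}$. But the device you propose to close that gap is not supplied by the lemmas you cite. Lemma~\ref{lemma:purely_divergent} only excludes the all-ones parity sequence (note that ``purely divergent'' in Definition~\ref{def:purely_divergent_seq} means exactly that, not ``unbounded''), and Lemma~\ref{lemma:periodicity_limits} is a statement about periodic sequences; neither yields the threshold ``unbounded $\Rightarrow \mu_\infty\ge\log_3 2$''. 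That threshold is plausible (for an orbit with $x_j\to\infty$ the harmonic product $\prod_j(1+1/x_j)$ in equation~\ref{eq:xk_2} eventually grows at an arbitrarily small exponential rate, which would force $\limsup_k\mu_k\ge\log_3 2$), but you would have to prove it, including the preliminary reduction that a non-eventually-periodic orbit satisfies $x_j\to\infty$ (pigeonhole on $\liminf$); as written it is an unproved assertion resting on inapplicable lemmas.

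The paper closes the gap by a different and more elementary move that your proposal is missing: take $x_0\in\mathbb Z_{c3}\setminus\{5,9\}$ and suppose $F_3^k(x_0)>x_0$ for all $k$, i.e.\ $x_0$ is the minimum of its own orbit. Then every $x_j\ge x_0>9$, so in equation~\ref{eq:xk_2} the harmonic product is at most $\left(1+\frac19\right)^k$ and
\[
F_3^k(x_0)\le x_0\left(\frac{3^{\mu_k}}{2}\cdot\frac{10}{9}\right)^k=x_0\left(\frac{5\cdot 3^{\mu_k}}{9}\right)^k\longrightarrow 0
\]
since $5\sqrt{3}/9<1$ once Theorem~\ref{theorem:equiparity} gives $\mu_k\to\tfrac12$ --- a contradiction. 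Hence every seed has a finite stopping time, and iterating the descent lands the orbit in $\{5,9\}$. This minimal-element argument is the concrete quantitative step your plan lacks; the upper bound of Lemma~\ref{lemma:bounds} that you lean on instead is, as you yourself observe, a dead end here.
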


\begin{proof}
 It is a direct consequence of the equiparity theorem. From theorem \ref{theorem:Mersenne_cycles} we have that the only possible cycle of $F_3(x)$ is the trivial one, $\{X_3(1),X_3(2)\}=\{5,9\}$. Then, let assume that there exists some $x_0 \in \mathbb Z_{c3} - \{5, 9\}$ such that for all $k \in \mathbb Z^+$,
  \[ x_0 < F^k_3(x_0) = x_0 \left(\frac{3^{\mu_k}}{2} \right)^k \prod_{j=0}^{k-1} \left(1 + \frac{1}{x_j} \right) , \]
where we have used equation \ref{eq:xk_2}. Since $x_j = F_3^j(x_0)\geq x_0 > 9$ for all $j < k$, then, using theorem \ref{theorem:equiparity}, we have

\[ x_0 < F_3^k(x_0) < x_0 \left(\frac{3^{\mu_k}}{2} \right)^k \left(1 + \frac{1}{9} \right)^k = x_0 \left(\frac{3^{\mu_k} 5}{9} \right)^k < x_0 \quad \text{as } k \longrightarrow \infty, \]

and we get a contradiction. Therefore, for all $x_0 \in \mathbb Z_{c3} - \{5, 9\}$, $F_3^k(x_0) < x_0$ for some finite $k \in \mathbb Z^+$, which is called the \emph{stopping time} \cite{lagarias_ultimate_2011}. Taking $y_0 = F_3^k(x_0)$, then there exists another finite stopping time $\ell \in \mathbb Z^+$ such that $F_3^{\ell}(y_0) < y_0$. Thus, we can keep iterating since there exists some $m \in \mathbb Z^+$ such that $F_3^{m}(x_0) \in \{ 5, 9\}$. 	
\end{proof}

Next sections explain in more detail the results showed so far. In section \S \ref{sec:periodic} we deal with periodic sequences. In section \S \ref{sec:asymptotic} we analyze the asymptotic behavior of the sequences. Finally, in the last section, we discus the decidability of the algorithm $qn+1$. 

\section{Periodic sequences}\label{sec:periodic}
The first obvious result about the periodic sequences is that the discrete dynamical system $qn+1$, $q\geq 3$, has no fixed point, since the equation $F_q(x)=x$ has no integer solutions in $\mathbb Z_{cq}$. Therefore we must deal with periodic sequences (cycles) of length greater than one. 

There are many previous results in the original $3n+1$ problem about the necessary conditions to have cycles. In contrast, the results are much less for the general case $qn+1$\cite{simons_non-existence_2007}. Steiner showed in \cite{steiner_theorem_1977} that an special cycle called \emph{non trivial circuit} is impossible for the $3n+1$ problem, and extended this result to $q=7$ and $q=5$ (the last one having only one \emph{non trivial circuit}) \cite{steiner_qx_1981,steiner_qx_1981-1}. Also, it was shown in \cite{eliahou_3x1_1993} and subsequent works that for the $3n+1$ problem, a non-trivial cycle must fulfill minimum conditions on its length and on the proportion of odd and even numbers. The refinements of these conditions culminated in the work of Simons and de Weger \cite{simons_theoretical_2005}, which stated that a non-trivial cycle for the $3n+1$ problem must have a length grater than $8.3485\cdot 10^{15}$, with more than $5.2673\cdot 10^{15}$ odd numbers. These remarkable results came mostly from transcendence theory and the theory of continued fractions.

Despite using powerful techniques of number theory like the ones above, it was not known whether the $3n+1$ algorithm has finitely many cycles or not. It was conjectured to be the case \cite{lagarias_ultimate_2011}, also for the general case $qn+1$ \cite{matthews_generalized_2010}, and in the following section we solve that question. But another problem we must face is to determine where are these cycles, and this seems intractable in general. Thankfully, $q$ values that are Mersenne numbers can be completely analyzed, showing a strong connection between them and the (non) existence of cycles.       

\subsection{The number of cycles of $F_q(x)$}\label{subsec:number_cycles}

Theorem \ref{theorem:main_periodicity} is the main result concerning periodic sequences for the general $qn+1$ problem. It states that, in general, $F_q(x)$ has finitely many cycles for each odd number $q\geq 3$. These cycles are grouped in $q-1$ congruent classes modulo $q$ in $\mathbb Z_{cq}$, and the seeds $x_0$ of the cycles in each class are in arithmetic progression. But first, we present an important lemma needed to prove the theorem.   

\begin{lemma}\label{lemma:periodicity_limits}
	The parity coefficient of a periodic sequence $PS^p_q(x_0)=\left\{x_j\right\}_0^{p-1}$ is
	\begin{equation}\label{eq:parity_coefficient_log}
		\mu_p = \frac{1}{p} \sum_{j=0}^{p-1} \log_q \left(\frac{2x_j}{x_j + 1} \right),
	\end{equation}
	and fulfills the following bounds, 
	\begin{equation}\label{eq:parity_coefficient_limits} 
	\frac{\ln ((2q-1)/q)}{\ln q} \leq \log_q \left( \frac{2x_m}{x_m + 1}\right) < \mu_p < \log_q \left( \frac{2x_M}{x_M + 1}\right) < \log_q 2 = \frac{\ln 2}{\ln q},
	\end{equation}
	where $x_m = \inf \{PS^p_q(x_0)\}$ and $x_M = \sup \{PS^p_q(x_0)\}$. Therefore, if $F_q(x)$ has a cycle with parity coefficient $\mu_p$, then 
	\begin{equation}\label{eq:cycle_q_limits}
		0<2-q^{\mu_p}<1/q.
	\end{equation}
\end{lemma}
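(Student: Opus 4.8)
The plan is to start from the product formula \ref{eq:xk_2} of Proposition \ref{prop:formula_xk} and exploit periodicity. Setting $k=p$ and using $x_p=x_0$, the seed $x_0$ cancels and I obtain the closure relation $2^p/q^{P_p}=\prod_{j=0}^{p-1}(x_j+1)/x_j$. Taking $\log_q$ of both sides and dividing by $p$ gives $\mu_p=P_p/p=\log_q 2-\frac{1}{p}\sum_{j=0}^{p-1}\log_q\bigl((x_j+1)/x_j\bigr)$. Rewriting each summand via $\log_q 2-\log_q\bigl((x_j+1)/x_j\bigr)=\log_q\bigl(2x_j/(x_j+1)\bigr)$ then yields formula \ref{eq:parity_coefficient_log} directly.

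For the inner bounds I would observe that $g(x)=2x/(x+1)=2-2/(x+1)$ is strictly increasing in $x$, so $\log_q g(x)$ is strictly increasing as well. Since every cycle value satisfies $x_m\leq x_j\leq x_M$, each summand in \ref{eq:parity_coefficient_log} lies between $\log_q g(x_m)$ and $\log_q g(x_M)$. Because the cycle has prime period $p>1$, its elements are pairwise distinct (Definition \ref{def:cycle}), so the summands are not all equal; hence the arithmetic mean $\mu_p$ lies \emph{strictly} between these two extremes, which supplies the two middle strict inequalities of \ref{eq:parity_coefficient_limits}.

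For the outermost bounds I argue separately. On the right, $g(x)<2$ for every finite $x$, so $\log_q g(x_M)<\log_q 2=\ln 2/\ln q$. On the left, I use that the smallest element of $\mathbb{Z}_{cq}$ is $X_q(1)=2q-1$, so $x_m\geq 2q-1$; monotonicity of $g$ then gives $g(x_m)\geq g(2q-1)=(2q-1)/q$, that is, $\log_q g(x_m)\geq \ln((2q-1)/q)/\ln q$. Chaining all four inequalities establishes \ref{eq:parity_coefficient_limits}. Finally, \ref{eq:cycle_q_limits} follows by exponentiation: $\mu_p<\log_q 2$ gives $q^{\mu_p}<2$, hence $2-q^{\mu_p}>0$, while $\mu_p>\log_q\bigl((2q-1)/q\bigr)$ gives $q^{\mu_p}>(2q-1)/q=2-1/q$, hence $2-q^{\mu_p}<1/q$.

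The derivation is largely mechanical once the closure relation is in hand. The only genuinely substantive points, and where I expect the care to be needed, are the left-hand bound — which rests on correctly identifying $2q-1=X_q(1)$ as the minimum element of $\mathbb{Z}_{cq}$ — and the strictness of the middle inequalities, which relies on the cycle elements being distinct by the prime-period convention.
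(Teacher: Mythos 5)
Your proposal is correct and follows essentially the same route as the paper: both start from the product formula \ref{eq:xk_2} with $x_p=x_0$, take logarithms to obtain \ref{eq:parity_coefficient_log}, bound the summands using the monotonicity of $2x/(x+1)$ together with $x_m\geq 2q-1 = X_q(1)$, and exponentiate to get \ref{eq:cycle_q_limits}. Your explicit justification of the strictness of the middle inequalities (via the pairwise distinctness of the cycle elements under the prime-period convention) is a detail the paper leaves implicit, but the argument is the same.
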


\begin{proof}
	From equation \ref{eq:xk_2} with $x_p = x_0$, and taking logarithms, we have
	\[ P_p \ln q = \sum_{j=0}^{p-1} \ln \left(\frac{2 x_j}{x_j + 1} \right), \]
	\[ \mu_p = \frac{P_p}{p} = \frac{1}{p \ln q} \sum_{j=0}^{p-1} \ln \left(\frac{2 x_j}{x_j + 1} \right).\]
	Since
	
	\[ \frac{2 x_m}{x_m + 1} \leq \frac{2 x_j}{x_j + 1} \leq \frac{2 x_M}{x_M + 1} < 2 \quad \forall j, \quad 0\leq j \leq p-1,  \]
	
	and $x_m \geq 2q-1$, we get the bounds of equation \ref{eq:parity_coefficient_limits}. Finally, a different rearrangement of the bounds leads to
	
	\[ \ln \left(1 - \frac{1}{2q} \right) < \ln \left( \frac{q^{\mu_p}}{2} \right) < 0, \]
	
	and equation \ref{eq:cycle_q_limits} follows from taking the exponential in the above equation.
\end{proof}

\paragraph{\textbf{Proof of Theorem \ref{theorem:main_periodicity}.}}
	Let assume that we have a periodic sequence of period $1<p \in \mathbb Z^+$ for some seed $x_0 = X_q(n)= 2(q-1)n + 1$, $n \in \mathbb Z^{+}$. Using equation \ref{eq:xk_1}, with $k=p$ and $x_p=x_0$, the parity vector of the sequence must fulfill the Diophantine exponential equation \ref{eq:periodicity}. We call this equation the \emph{first periodicity condition}. 
	
	The parity vector of that periodic sequence starts at the infimum of the sequence, $x_0$, thus $\alpha_q(x_0)=1$. After that, the different values $x_j \in PS_q^p(x_0)$, $1\leq j <p-1$, can increase or decrease multiple times, but the last term $x_{p-1}$ always decrease, since $x_p$ is the absolute minimum by periodicity. Therefore, in the parity vector, there exists an index $s \in \mathbb Z^+$, $1 \leq s \leq p-1$, such that $\alpha_q(x_s)=\alpha_q(x_{s+1})=\cdots=\alpha_q(x_{p-1})=0$. Rearranging terms in equation \ref{eq:periodicity}, we have,
	\[ 2^{p+1}n - 2^p + \sum_{j=s}^{p-1} 2^j = q 2^{p+1} n - 2(q-1)q^{P_p} n - 2q^{P_p} - \sum_{j=1}^{s-1} 2^j q^{|\mathbf{A}_p(x_0)|_{j}^{p-1}}, \]
	where $1 \leq s \leq p-1$ is taken such that $|\mathbf{A}_p(x_0)|_{j}^{p-1} = 0$ for all $j\geq s$. The right hand side of the above equation is proportional to $2q$, thus the left hand side should also be proportional to $2q$, and the cycle must verify
	\[ 2^{p+1} n - 2^p + \sum_{j=s}^{p-1} 2^j = 2 q m, \quad m \in \mathbb N, \]
	which leads to the equation
	\begin{equation}\label{eq:cycle}
		2^p n - q m = 2^{s-1}.	
	\end{equation}
	Its general solution is an affine map,
	\begin{equation}
	\begin{aligned}
		n &= n_0 + q \lambda \nonumber \\
		m &= m_0 + 2^p \lambda \nonumber
	\end{aligned} \;\;, \quad \text{with} \;\; 2^p n_0 - q m_0 = 2^{s-1} \quad \text{ the fixed solution,}
	\end{equation}
	and $\lambda \in \mathbb Z$ a free parameter. Since $x_0$ is the absolute minimum of the sequence, $n$ should also be a minimum, and $\lambda$ must be positive or equal zero, i.e., $\lambda \in \mathbb N$. Moreover, $n_0$ must be a residual class modulo $q$, but also $n_0 \geq 1$, thus $n_0 \equiv h \mod q$ and $h \in \{1, 2, \ldots, q-1 \}$. This can be summarized defining $n$ in each residual class as
		\[ n_{h} = h + \lambda_h q, \quad h = 1, 2, \ldots, q-1\]
for some $\lambda_h \in \mathbb N$. Thus, we have that all the possible seeds (minimums) for a given cycle in a congruent class $h$ are
	 \[ x_{0}(\lambda_h) = X_q(n_h) = X_q(h) + 2 q (q-1) \lambda_h, \quad \lambda_h \in \mathbb N. \]
	
	In addition, since every $x_{0}(\lambda)$ must be a minimum of the sequence, e.i. $\alpha_N(n_{h}) = 1$, $\lambda_h$ must be odd if $h$ is even, and Vice versa.
		
	Now, for some $h^* \in \{1, 2, \ldots, q-1 \}$, let $PS_q^{\ell} = \{X_0, X_1, \ldots, X_{\ell-1}\}$ be a periodic sequence of length $\ell$, with $X_0=X_q(h^* + \gamma q)$ its seed, $\gamma \in \mathbb N$ a fixed constant, and $\mathbf A_{\ell}(X_0) \equiv \mathbf A_{\ell} =(\alpha^0, \alpha^1, \ldots, \alpha^{\ell-1})$ its parity vector. Consider a new periodic sequence of length $r$, $\hat{PS}_q^r (\lambda) = \{x_0(\lambda), x_1(\lambda), \ldots, x_{r-1} (\lambda) \}$, with $\lambda = \lambda_{h^*}$ a free parameter, and its parity vector $\mathbf A_r(x_0(\lambda)) \equiv \hat{\mathbf A}_r =(\hat{\alpha}^0, \hat{\alpha}^1, \ldots, \hat{\alpha}^{r-1})$. These two sequences satisfy $\alpha^0 = \hat{\alpha}^0 = 1$ and $\alpha^{\ell-1} = \hat{\alpha}^{r-1} = 0$. In general $r\neq \ell$, but we can extend both sequences to have the same length $p = LCM(\ell,r)$ yet conserving their periodicity. Therefore, to simplify the analysis, we restrict ourselves to periodic sequences of length $p$, that is $\ell = r = p$. Using equation \ref{eq:periodicity} for the periodic sequence $PS_q^{p}(X_0)$, we have
\[
	\left(2^p - q^{|\mathbf A_p|_{0}^{p-1}} \right) X_0 = \sum_{j=0}^{p-1} 2^j q^{|\mathbf A_p|_{j}^{p-1}}, \quad 1< p \in \mathbb Z^{+}.
\]
Analogously, for the periodic sequence $\hat{PS}_q^{p}(x_0(\lambda))$, we have 
\[
	\left(2^p - q^{|\hat{\mathbf A}_p|_{0}^{p-1}} \right) x_0(\lambda) = \sum_{j=0}^{p-1} 2^j q^{|\hat{\mathbf A}_p|_{j}^{p-1}}, \quad 1 < p \in \mathbb Z^{+}.
\]

If both sequences have the same parity coefficient, $\hat{\mu}_p = \mu_p$, then they must fulfill the relation
\[ 
		x_0(\lambda) \sum_{j=0}^{p-1} 2^j q^{|\mathbf A_p|_{j}^{p-1}} = X_0 \sum_{j=0}^{p-1} 2^j q^{|\hat{\mathbf A}_p|_{j}^{p-1}},
\]	
which can be stated as
\[
		\sum_{j=0}^{p-1} 2^j \phi_j = 0, \quad \quad \phi_j = x_0(\lambda) q^{|\mathbf A_p|_{j}^{p-1}} - X_0 q^{|\hat{\mathbf A}_p|_{j}^{p-1}}, \quad j=0, \ldots, p-1. 
\]

This is a kind of a weighted average of the sequence $\{\phi_j\}_0^{p-1}$. The last terms of the sum have the largest contribution on the average when $p$ is sufficiently large. But, we can ensure this, adding an arbitrarily number of periods to the sequence (the length of the resulting sequence will be a multiple of $p$). This suggests that for some $m \in \mathbb Z^+$, $m \leq p-1$, $\phi_j = 0$ for all $j \geq m$. In particular, $\phi_{p-1}=0$, which means that $x_0(\lambda) = X_0$, and both sequences $PS_q^{p}(X_0)$ and $\hat{PS}_q^{p}(x_0(\lambda))$ must be identical. 

Therefore, there is a one-to-one relation between each admissible $\lambda$ of a periodic sequence $\hat{PS}_q^{p}(x_0(\lambda))$ and its parity coefficient $\hat{\mu}_p$ ($\mu_p$ corresponds to the case $\lambda = \gamma$). 	

This means that for a given residual class $h^*$, there exists an arithmetic function $\mu = \mu (\lambda)$ that relates uniquely the parity coefficient with the parameter $\lambda$. Then, applying lemma \ref{lemma:periodicity_limits} to the sequence $\hat{PS}_q^{p}(x_0(\lambda))$ and the inequality $\ln (1 + x_j^{-1}) < x_j^{-1}$ for all $j=0, \ldots, p-1$, we can bound that arithmetic function $\mu (\lambda)$ by
\[
		0 < \frac{\ln 2}{\ln q} - \mu(\lambda) < \frac{1}{p \ln q} \sum_{j=0}^{p-1} \frac{1}{x_j(\lambda)} < \frac{1}{\ln q} \frac{1}{(X_q(h^*) + 2q(q-1) \lambda)}.
\] 
	
	Thus, there must exist some finite $\lambda^*\in \mathbb N$ such that all the possible cycles $\hat{PS}_q^{p}(x_0(\lambda))$ in the congruent class $h^*$ have a $\lambda < \lambda^*$, otherwise we get a contradiction. This finally proves that there are finitely many admissible parity coefficients for each congruent class $h \in \{1, 2, \ldots, q-1 \}$.	 	
	\qed
	
\begin{remark}
	In principle, the above arguments will also work extending $\mathbb Z_{cq}$ to the negatives, which corresponds to the dynamical system $qn-1$ in the positives via the conjugation \ref{lemma:conjugacy}.
\end{remark}	

\subsection{The cycles of $F_q(x)$ for $q$ a Mersenne number}\label{subsec:Mersenne_cycles}
We begin with a lemma that states a second periodicity condition for a sequence. After-that, we introduce a necessary condition to have a cycle. Finally, we use both results to prove theorem \ref{theorem:Mersenne_cycles}. 

\begin{lemma}\label{lemma:second_periodicity}
(Second periodicity condition). The function $F_q(x)$ has a periodic sequence $PS^p_q(X_q(n_0))$ with parity vector $\mathbf A_p(X_q(n_0)) \equiv \mathbf A_p$, $n_0 \in \mathbb Z^{+}$, if and only if
	\begin{equation}\label{eq:periodicity_2}
	\left(2^p - q^{P_p}\right)n_0 = \sum_{j=0}^{P_p-1} 2^{g(j)}q^j, \quad p, P_p \in \mathbb Z^{+}, \quad p >1,
\end{equation}
where the arithmetic function 
\begin{align*}
		g_{\mathbf A_p} \equiv g: \mathbb N \cap [0,P_p-1] &\longrightarrow \mathbb N \cap [0,p-2] \\
		j &\longrightarrow g(j)
\end{align*} 
searches the position of each number one in the parity vector as follows:
	\[ |\mathbf{A}_p|_{g(j)}^{p-1} = j+1 \quad \text{ and } \quad |\mathbf{A}_p|_{g(j)+1}^{p-1} = j. \]
\end{lemma}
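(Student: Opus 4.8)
The plan is to derive the second periodicity condition directly from the first one (equation \ref{eq:periodicity}), which Proposition \ref{prop:formula_xk} already shows to be necessary and sufficient for periodicity; since every manipulation below is an algebraic equivalence, the claimed ``if and only if'' follows automatically. Writing $e_j := |\mathbf{A}_p|_j^{p-1}$ for the partial parities, the right-hand side of \ref{eq:periodicity} is $S := \sum_{j=0}^{p-1} 2^j q^{e_j}$, and the whole task reduces to re-expressing $S$ so that the seed $x_0 = X_q(n_0) = 2(q-1)n_0 + 1$ factors out cleanly in terms of $n_0$.

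First I would expand each power using the geometric identity $q^{e_j} = 1 + (q-1)\sum_{i=0}^{e_j - 1} q^i$. This turns $S$ into $\sum_{j=0}^{p-1} 2^j$ plus $(q-1)$ times the double sum $\sum_{j=0}^{p-1}\sum_{i=0}^{e_j-1} 2^j q^i$; the first piece equals $2^p - 1$. For the double sum I would swap the order of summation: for fixed $i$, the inner condition $i \leq e_j - 1$, i.e. $e_j > i$, selects exactly the initial block of indices $j = 0, 1, \ldots, g(i)$, because $e_j$ is non-increasing in $j$ and its last index exceeding $i$ is precisely the $g(i)$ of the statement (this is where the defining relations $|\mathbf{A}_p|_{g(i)}^{p-1} = i+1$ and $|\mathbf{A}_p|_{g(i)+1}^{p-1} = i$ enter). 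The outer index $i$ then runs over $0 \leq i \leq P_p - 1$, and $\sum_{j=0}^{g(i)} 2^j = 2^{g(i)+1} - 1$. Collecting terms gives $S = 2^p - q^{P_p} + 2(q-1)\sum_{i=0}^{P_p-1} 2^{g(i)} q^i$, where the $-q^{P_p}$ absorbs the leftover geometric sum $(q-1)\sum_{i=0}^{P_p-1} q^i = q^{P_p} - 1$.

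To finish, substitute $x_0 = 2(q-1)n_0 + 1$ into $(2^p - q^{P_p})x_0 = S$, expand, and cancel the common summand $2^p - q^{P_p}$ appearing on both sides; dividing the remainder by $2(q-1)$ (nonzero for $q \geq 3$) yields exactly $(2^p - q^{P_p})n_0 = \sum_{j=0}^{P_p-1} 2^{g(j)} q^j$. The cancellation is legitimate because $2^p - q^{P_p}$ is even minus odd, hence nonzero. The reverse implication is obtained by running the same chain backwards, so the equivalence with \ref{eq:periodicity}, and thus with periodicity, is complete.

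The main obstacle I anticipate is the combinatorial bookkeeping in the summation swap: one must argue carefully that, since $e_j = \sum_{s=j}^{p-1}\alpha^s$ decreases by $\alpha^j \in \{0,1\}$ at each step, the set $\{j : e_j > i\}$ is genuinely an initial segment $\{0, \ldots, g(i)\}$, and that this $g(i)$ coincides with the position-of-ones description in the lemma (equivalently, $g(i)$ is the position of the $(i+1)$-th one counted from the right end of the parity vector). The remaining manipulations are routine algebra, and a quick sanity check against the known $q = 5$ cycles in Table \ref{table:cycles_5} confirms the final identity.
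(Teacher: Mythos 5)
Your proposal is correct and takes essentially the same route as the paper: both start from the first periodicity condition, expand each $q^{e_j}$ as a geometric sum, substitute the affine seed $x_0 = 2(q-1)n_0+1$ to cancel the $2(q-1)$ factor, and swap the order of summation using the fact that $\{j : e_j > i\}$ is the initial segment $\{0,\ldots,g(i)\}$. The only difference is the order in which the substitution and the swap are performed, which is immaterial.
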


\begin{proof}
From the first periodicity condition, equation \ref{eq:periodicity}, with $x_0 = X_q(n_0) = 2(q-1)n_0 +1$, and rearranging terms, we have

\[ 2(2^{p} - q^{P_p}) n_0 = \frac{(q^{P_p}-1)}{(q-1)} + \sum_{j=0}^{s-1} 2^j \frac{(q^{|\mathbf{A}_p|_j^{p-1}}-1)}{(q-1)}, \] 

where $s \in \mathbb Z^+$ is taken such that $|\mathbf{A}_p|_{j}^{p-1} = 0$ for all $j\geq s$, $1 \leq s \leq p-1$. After some algebra, the above equation results 

\[ (2^p - q^{P_p}) n_0 = \sum_{k=0}^{P_p - 1} q^k + \sum_{j=1}^{s-1} 2^{j-1} \sum_{k=0}^{|\mathbf{A}_p|_j^{p-1}-1} q^k. \]

	It is easy to verify that $g(0)=s-1$. The function $g(n)$ is monotonically decreasing. Its lowest value is $g(P_p-1)=0$, and its highest one is precisely $g(0)=s-1$. Then,
	
\begin{align*} 
	\sum_{j=1}^{g(0)} 2^{j-1} \sum_{k=0}^{|\mathbf{A}_p|_j^{p-1}-1} q^k &= \sum_{j=1}^{g(P_p-2)} 2^{j-1} \sum_{k=0}^{P_p-2} q ^k + \sum_{j=g(P_p-2)+1}^{g(P_p-3)} 2^{j-1} \sum_{k=0}^{P_p-3} q^k + \\
	&~ \quad \quad \cdots + \sum_{j=g(1)+1}^{g(0)} 2^{j-1} \\
	&= \sum_{k=0}^{P_p - 2} q^k \sum_{j=1}^{g(k)} 2^{j-1} = \sum_{k=0}^{P_p - 2} q^k(2^{g(k)}-1). \nonumber
\end{align*}

\noindent Finally,

\[ (2^p - q^{P_p})n_0 = \sum_{k=0}^{P_p - 1} q^k + \sum_{k=0}^{P_p - 2} q^k(2^{g(k)}-1) = \sum_{k=0}^{P_p-1} 2^{g(k)}q^k. \]

\end{proof}

\begin{lemma}\label{lemma:Q_cycles}
	If $F_q(x)$ has a cycle of period $p$ for the seed 
	\[x_{0} = X_q(h + \lambda_h q) = 2(q-1)(h + \lambda_h q) + 1,\quad h=1,\ldots,q-1 ,\]
then $q$ must be a divisor of $M=h2^k-1$ for some positive integer $k=p-g(0)$, where the function $g$ is defined in lemma \ref{lemma:second_periodicity}. Furthermore, if $h=1$ (trivial cycle) or $h$ is a power of 2, then $q$ must be a Mersenne number or a factor of a Mersenne number $M_m=2^m-1$, $1 < m \in \mathbb Z^{+}$.
\end{lemma}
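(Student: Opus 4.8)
The plan is to exploit the second periodicity condition (Lemma \ref{lemma:second_periodicity}) and reduce it modulo $q$. Since the seed is $x_0 = X_q(h + \lambda_h q)$, the associated integer is $n_0 = h + \lambda_h q$, so that $n_0 \equiv h \pmod q$. Substituting into equation \ref{eq:periodicity_2} gives
\[
	(2^p - q^{P_p}) n_0 = \sum_{j=0}^{P_p - 1} 2^{g(j)} q^j.
\]

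First I would reduce both sides modulo $q$. On the left, $q^{P_p} \equiv 0$ and $n_0 \equiv h$, so the left-hand side becomes $2^p h$. On the right, every term with $j \geq 1$ carries a factor $q^j \equiv 0 \pmod q$, so only the $j = 0$ term survives, namely $2^{g(0)}$. Thus
\[
	2^p h \equiv 2^{g(0)} \pmod q.
\]
Because $q$ is odd, $2$ is invertible modulo $q$, and multiplying by the inverse of $2^{g(0)}$ yields $h\, 2^{p - g(0)} \equiv 1 \pmod q$. Setting $k = p - g(0)$, this is exactly $q \mid (h 2^k - 1) = M$, which establishes the first claim.

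For the special cases I would substitute the explicit form of $h$. If $h = 1$, then $M = 2^k - 1 = M_k$ is a Mersenne number, so $q$ divides a Mersenne number (and is itself one when $q = 2^k - 1$). If $h = 2^t$ is a power of two (with $t \geq 0$ and $2^t \leq q-1$), then $M = 2^{t+k} - 1 = M_{t+k}$, again a Mersenne number, with index $m = t + k$. To confirm $m > 1$, I would invoke the range of the function $g$ established in Lemma \ref{lemma:second_periodicity}: since $g(0) = s - 1$ with $1 \leq s \leq p-1$, we have $g(0) \leq p - 2$ and hence $k = p - g(0) \geq 2$; therefore $m = t + k \geq 2 > 1$.

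I do not expect a substantial obstacle, as the argument is essentially a single modular reduction. The only points requiring care are verifying the collapse of the right-hand sum to its constant term $2^{g(0)}$, and confirming $k \geq 2$ (whence $m > 1$) from the codomain of $g$, so that the Mersenne index is genuinely larger than one.
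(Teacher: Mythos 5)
Your proposal is correct and follows essentially the same route as the paper: both arguments rest on the second periodicity condition of Lemma \ref{lemma:second_periodicity} and on the observation that every term of the right-hand sum except $2^{g(0)}$ is divisible by $q$, so that $q \mid 2^{g(0)}(h2^{p-g(0)}-1)$ and hence $q \mid h2^k-1$. Your phrasing as a reduction modulo $q$ is a slightly cleaner packaging of the paper's explicit rearrangement, and your check that $k=p-g(0)\geq 2$ (so the Mersenne index $m>1$) is a small point of care the paper leaves implicit.
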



\begin{proof}
Let assume that $F_q(x)$ has a cycle of period $p$, with parity vector 
\begin{align*}
	\mathbf A_p=(1,\alpha^1, \alpha^2,\ldots,\alpha^{s-2},\alpha^{s-1}=1, \alpha^s = 0, \ldots, \alpha^{p-1}=0),& \\
	\quad 1 \leq s \leq p-1, \quad 1 \leq P_p \leq s,& 
\end{align*} 	
where, as before, $s \in \mathbb Z^+$ is taken such that $|\mathbf A_p|_{j}^{p-1} = 0$ for all $j\geq s$. Then, from equation \ref{eq:periodicity_2}, we have
\[
	2^{g(0)}(h2^{p-g(0)}-1) = h q^{P_p} - \lambda_h q (2^p-q^{P_p}) + \sum_{j=1}^{P_p-1} 2^{g(j)} q^{j},
\]
where $k=p-g(0)=p-s+1$ is a positive integer. Since the right hand side of the above equation is proportional to $q$, $q$ must be a divisor of $M=h2^k-1$. Finally, if $h=2^a$ for some $a \in \mathbb N$, then $M=2^{k+a}-1$, which is a Mersenne number for the exponent $m=k+a$.
\end{proof}

\begin{lemma}\label{lemma:Mersenne_residual_class}
	Let $q$ be a Mersenne number, $q= 2^{m} -1$, $1 < m \in \mathbb Z^+$. If $q$ divides $k2^c-1$ for some $c \in \mathbb Z$ and some $k \in \mathbb Q$ such that $k2^c \geq 1$, then $k=2^d$ for some $d \in \mathbb Z$, $d<m$.		
\end{lemma}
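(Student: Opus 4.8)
The plan is to convert the divisibility hypothesis into a statement about the multiplicative order of $2$ modulo $q$, which is extremely rigid precisely because $q$ is Mersenne. First I would record the key arithmetic fact that $\operatorname{ord}_q(2)=m$. Indeed $q=2^m-1$ is odd, so $\gcd(2,q)=1$ and $2$ is invertible modulo $q$; moreover $2^m=q+1\equiv 1\pmod q$, while for every $j$ with $0<j<m$ we have $0<2^j-1<2^m-1=q$, so $q\nmid 2^j-1$ and no smaller positive exponent works. Hence the subgroup $\langle 2\rangle\le(\mathbb Z/q\mathbb Z)^{*}$ equals $\{1,2,2^2,\dots,2^{m-1}\}$, and (since $2^{m-1}<2^m-1$ for $m\ge 2$) each such residue is represented by the honest integer $2^j$ lying strictly below $q$.

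Next I would translate the hypothesis. The assumption $q\mid k2^c-1$ forces $k2^c$ to be an integer, and with $k2^c\ge 1$ it is a positive integer $N$ with $N\equiv 1\pmod q$; write $k=N\,2^{-c}$. Inverting the power of two modulo $q$ and using $\operatorname{ord}_q(2)=m$, I obtain $k\equiv 2^{-c}\equiv 2^{d}\pmod q$, where $d$ is the residue of $-c$ modulo $m$, so $0\le d\le m-1$. This already exhibits $k$ as congruent, modulo $q$, to a power of two whose exponent is strictly less than $m$, which is the entire content at the level of residues.

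The remaining and genuinely delicate step is to upgrade this congruence to the exact identity $k=2^d$. Here I would invoke the range inherited from the setting of Lemma \ref{lemma:Q_cycles}, where $k$ plays the role of the class label $h\in\{1,\dots,q-1\}$, so that $0<k<q$. Since both $k$ and $2^d$ are integers in $\{1,\dots,q-1\}$ and are congruent modulo $q$, they must coincide, giving $k=2^d$ with $d<m$. I expect this identification to be the main obstacle: the congruence alone does not pin $k$ down, because the larger representatives $N=1+\ell q$ need not be powers of two (for example $N=1+2q=2^{m+1}-1$ is $\equiv 1\pmod q$ yet is not a power of two), so the argument must genuinely use the bound $k<q$ (equivalently $k2^c<q+1=2^m$) alongside $k2^c\ge 1$ to discard every representative except the intended power $2^d$.
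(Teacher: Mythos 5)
Your route is genuinely different from the paper's. The paper proceeds by descent: writing $c=am+b$ and setting $K_\ell = k2^b2^{(a-\ell)m}$, it repeatedly replaces the condition $\sigma_\ell(2^m-1)=K_\ell-1$ by the same condition for $K_{\ell+1}$ until $K_\ell<2^m$, and then forces $K_\ell=1$. You instead compute $\operatorname{ord}_q(2)=m$, deduce $k\equiv 2^{-c}\equiv 2^{d}\pmod q$ with $0\le d\le m-1$, and identify $k$ with the honest integer $2^d$ using the bound $0<k<q$. Within its hypotheses your argument is correct and cleaner than the paper's.

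The difficulty is that the bound $0<k<q$ (and the integrality of $k$) that you import ``from the setting of Lemma~\ref{lemma:Q_cycles}'' is not among the lemma's hypotheses, and it is not available where the paper actually invokes the lemma: in the proof of Theorem~\ref{theorem:Mersenne_cycles} it is applied with $k=n_0$, the seed of a putative cycle, which by Theorem~\ref{theorem:main_periodicity} has the form $h+\lambda_h q$ and need not lie below $q$. So your proof establishes a strictly weaker statement than the one the paper needs. This is not a defect you could have repaired: as your own example shows (take $c=0$ and $k=2q+1=2^{m+1}-1$; concretely $q=7$, $k=15$, where $7\mid 14$ but $15$ is not a power of $2$), the lemma as stated is false without a bound on $k$. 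The paper's descent proof fails at exactly this point: when $K_1=k2^b2^{(a-1)m}$ is not an integer, the identity $\sigma_0(2^m-1)=K_1(2^m-1)+K_1-1$ does not produce a natural number $\sigma_1$ with $\sigma_1(2^m-1)=K_1-1$ (in the example $K_1-1=7/8$), so it wrongly discards genuine solutions. In short, your congruence argument is the right tool and you correctly located the missing hypothesis; the resulting gap belongs to the lemma and its use in the paper, not to your reasoning.
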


\begin{proof}
	Let $c = am+b$, for some constants $a,b \in \mathbb Z$, $0\leq b < m$. Since $q$ divides $k2^c-1$, there must exist some $\sigma_0 \in \mathbb N$, such that
	\begin{equation}\label{eq:Mersenne_class}
		\sigma_0 (2^m-1) = k 2^{b}2^{am}-1.
	\end{equation}
We define $K_0 = k 2^{b}2^{am}$. If $K_0 < 2^m$, $\sigma_0 = 0$ and $k=2^{-c}$. If $K_0 \geq 2^m$, then we define $K_1 = k 2^{b}2^{(a-1)m}\geq 1$, and equation \ref{eq:Mersenne_class} results
\[ 
	\sigma_0 (2^m-1) = K_1(2^m-1) + K_1-1.
\]
Therefore, for a valid solution, $2^m-1$ also divides $K_1-1$, and there must exist some $\sigma_1 \in \mathbb N$ such that
\[
		\sigma_1 (2^m-1) = K_1-1.
\]
Since $K_1< K_0$, this defines a descend procedure that can be iterated until $K_{\ell} = k 2^{b}2^{(a-\ell)m} < 2^m$ for some step $\ell \in \mathbb Z^+$, and equation \ref{eq:Mersenne_class} has a valid solution if and only if
\[
	\sigma_{\ell} (2^m-1) = K_{\ell}-1.
\]
In that case, the only possible solution is $\sigma_{\ell}=0$ and $K_{\ell}=1$, which means that $k=2^{\ell m - c}$.
\end{proof}

\noindent Now, we are in conditions to prove the other main result concerning periodic cycles.\\

\paragraph{\textbf{Proof of Theorem \ref{theorem:Mersenne_cycles}.}}
Let $q$ be a Mersenne number, $q= 2^{p} -1$, prime or composite. Using lemma \ref{lemma:conjugacy}, the sequence $S_q=\{F^j_q(x_0)\}_{j \in \mathbb N}$ is equivalent to the sequence $SQN_{q}=\{T^j_q(n_0)\}_{j \in \mathbb N}$, which is better for the present analysis. Taking $n_0 = 1$ as the seed for the trivial cycle, we have
	\[ n_1=T_q(1)= \frac{q + 1}{2} = 2^{p-1}. \] 
Therefore, $T_q^p(1) = 1$, and equivalently, the function $F_q(x)$ has a trivial cycle with period $p$ and parity coefficient $\mu_p = 1/p$. 

Conversely, let assume that $F_q(x)$, with $q = 2^{k} - 1$ a Mersenne number, has a cycle for some $n_0 > 1$ and some parity vector $\mathbf A_r$ of period $1< r \in \mathbb Z^{+}$, which comes as a solution of equation \ref{eq:periodicity_2}. Let extend that cycle $k$ times, hence the periodic sequence has now a period $p=k r$ and a total parity $P_p = k P_r$, that is,
\[
	\left(2^{kr} - q^{k P_r} \right) n_0 = \sum_{j=0}^{k P_r-1} 2^{g(j)} q^j. 
\]
Analogously, we extend the trivial cycle $r$ times, verifying the equation
\[
	2^{kr} - q^{r} = \sum_{j=0}^{r-1} 2^{(r-j-1)k} q^j. 
\]  
Combining both equations, we have that
\[
2^{g(0)}\left(n_0 2^{(r-1)k-g(0)} - 1 \right) = \left(q^{k P_r} - q^r \right) n_0 + \sum_{j=1}^{k P_r - 1} 2^{g(j)} q^j - n_0 \sum_{j=1}^{r-1} 2^{(r-j-1)k} q^j.
\]  
Since the RHS of the above equation is proportional to $q$, $q$ divides the term $n_0 2^{(r-1)k-g(0)} - 1$. Finally, applying lemma \ref{lemma:Mersenne_residual_class}, $n_0 = 2^d$ for some $d \in \mathbb Z$. But, in order to be $n_0$ a valid seed for a non-trivial cycle, $n_0$ must be odd and greater than one, leading to the final contradiction.\qed

\subsection{General conditions for the existence of cycles}\label{subsec:conditions_cycles}
As mentioned before, the search of the cycles of the function $F_q(x)$ is, in general, out of reach for the techniques used in this work. However, in addition to lemma \ref{lemma:Q_cycles}, we highlight below some general conditions that cycles must fulfill in order to exist. For completion, we show in table \ref{table:cycles} the known non-trivial cycles (up to our knowledge) of the function $F_q(x)$ for different values of $q$ \cite{steiner_qx_1981}. 
\begin{table}[htbp]
	\caption{Known non-trivial cycles of $F_q(x)$ for different values of $q$.}
	\label{table:cycles}	
		\begin{tabular}{c c c c c}
		\hline\noalign{\smallskip} 				 
		 $q$ & $p$ & $P_p$ & $n_{h}$ & $x_{0}$ \\
		\noalign{\smallskip}\hline\noalign{\smallskip} 
		 $5$ & 7 & 3 & $2+3\cdot 5 =17$ & 137 \\
		 $5$ & 7 & 3 & $3+2\cdot 5 =13$	& 105 \\
		$181$ & 15 & 2 & $27+0\cdot 181 =27$ & 9721\\
		$181$ & 15 & 2 & $35+0\cdot 181 =35$ & 12601\\
		\noalign{\smallskip}\hline
		\end{tabular}			
	\end{table}

\begin{proposition}\label{prop:non_existence_cycles}
If $q$ is a composite number, then $F_q(x)$ doesn't have any cycle in the congruent classes $h=md$, where $d$ is any divisor of $q$ different from one and $q$, and $m\in \mathbb Z^+$ is any multiplier such that $md \leq q-1$. 
\end{proposition}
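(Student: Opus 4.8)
The plan is to argue by contradiction, leaning entirely on the divisibility condition furnished by Lemma~\ref{lemma:Q_cycles}. First I would suppose, for contradiction, that $F_q(x)$ does admit a cycle whose seed lies in the congruent class $h = md$, where $d \mid q$ with $1 < d < q$ and $md \le q-1$. The hypotheses $m \ge 1$, $d \ge 2$, and $md \le q-1$ guarantee that $h = md \in \{1, 2, \ldots, q-1\}$, so that Lemma~\ref{lemma:Q_cycles} is indeed applicable to this class.

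Invoking Lemma~\ref{lemma:Q_cycles}, the existence of such a cycle of period $p$ forces $q$ to divide $M = h2^k - 1 = md\,2^k - 1$ for some positive integer $k = p - g(0)$. The crux of the argument is then a one-line reduction modulo $d$: since $d$ divides $q$ and $q \mid md\,2^k - 1$, transitivity gives $d \mid md\,2^k - 1$. On the other hand, $d$ trivially divides $md\,2^k$. Subtracting, $d \mid (md\,2^k) - (md\,2^k - 1) = 1$, whence $d = 1$, contradicting $d > 1$. This contradiction rules out any cycle in the class $h = md$.

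There is no serious obstacle here: the whole proposition reduces to the elementary observation that $md\,2^k - 1$ is coprime to $d$, so it can never be divisible by a number $q$ that shares the factor $d$. The only point deserving care is verifying that $h = md$ genuinely falls in the admissible range $\{1, \ldots, q-1\}$ demanded by Lemma~\ref{lemma:Q_cycles}, which is exactly what the constraint $md \le q-1$ secures. I would also note that the restriction $d \neq q$ is automatic under $md \le q-1$ (otherwise $m \ge 1$ would force $h \ge q$), and that taking $q$ composite is precisely the hypothesis ensuring that proper divisors $d$ with $1 < d < q$ exist, so that the statement is nonvacuous.
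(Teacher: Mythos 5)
Your proof is correct and rests on the same divisibility-modulo-$d$ mechanism as the paper's: the paper substitutes $n_0 = d(m + \lambda_h q/d)$ directly into the second periodicity condition (equation \ref{eq:periodicity_2}) and observes that the odd divisor $d>1$ cannot divide the leftover term $2^{g(0)}$, whereas you reach the equivalent contradiction $d \mid 1$ by routing through Lemma \ref{lemma:Q_cycles}, which is itself a consequence of that same periodicity condition. Your packaging is, if anything, marginally cleaner, since the step $d \mid md\,2^k$ and $d \mid md\,2^k-1$ forces $d=1$ without even needing the remark that $d$ is odd.
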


\begin{proof}
All the possible cycles are solutions of the \emph{second periodicity condition}, equation \ref{eq:periodicity_2}. As $q$ is a composite number, we take $d$ as a non-trivial divisor of $q$, that is, $d \neq 1, q$. If there exists some $m \in \mathbb Z^+$ such that $md\leq q-1$, then we have

\[ \left(2^p - q^{P_p}\right)d(m + \lambda_h (q/d)) = 2^{g(0)} + \sum_{j=1}^{P_p-1} 2^{g(j)}q^j, \quad p, P_p >1, \]
  
where, from theorem \ref{theorem:main_periodicity}, the seed must be $n_h = h + \lambda_h q$. Since $q$ is odd, $d$ never divides $2^{g(0)}$, and the above equation doesn't have any valid solution. Therefore, there isn't exist any valid seed $n_{h}$ for a cycle in the congruent class $h=md$.
\end{proof}

\begin{proposition}\label{prop:cycle_parity_one}
	The function $F_q(x)$ has a cycle with total parity $P_p=1$ if and only if $q$ is a Mersenne number.
\end{proposition}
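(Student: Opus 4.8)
The plan is to prove both implications using the second periodicity condition (Lemma \ref{lemma:second_periodicity}), which is tailor-made for this kind of parity-counting statement. For the reverse direction I would simply invoke Theorem \ref{theorem:Mersenne_cycles}: if $q = 2^m - 1$ is a Mersenne number, that theorem already furnishes the trivial cycle of period $m$ with parity coefficient $\mu_m = 1/m$, whence $P_m = m\mu_m = 1$. So a cycle with total parity one exists, and this half needs no new computation.

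The forward direction is where the real work lies, though it is short. Suppose $F_q(x)$ has a cycle with $P_p = 1$. The first step is to pin down the parity vector. Since the seed $x_0$ of a cycle is taken as its absolute minimum, the increasing (up) branch must be applied first, so $\alpha^0 = 1$; combined with $P_p = 1$ this forces the parity vector to be $(1, 0, \ldots, 0)$. Consequently the cut index is $s = 1$, and the position function of Lemma \ref{lemma:second_periodicity} satisfies $g(0) = s - 1 = 0$. Substituting $P_p = 1$ and $g(0) = 0$ into equation \ref{eq:periodicity_2} collapses the right-hand sum to a single term and yields $(2^p - q)\,n_0 = 2^{g(0)} = 1$.

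The final step is an integrality argument: the positivity of the right-hand side of equation \ref{eq:periodicity} guarantees $2^p - q^{P_p} = 2^p - q > 0$, so both factors $2^p - q$ and $n_0 \in \mathbb{Z}^+$ are positive integers whose product is $1$. Hence $2^p - q = 1$ and $n_0 = 1$, so $q = 2^p - 1$ is a Mersenne number (and the cycle is necessarily the trivial one). I expect the only delicate point to be justifying the exact shape of the parity vector, namely that the single parity-one entry must sit at index $0$; everything after that is immediate. As a sanity cross-check one may re-derive the same relation directly in the $T_q$ picture: the single odd step sends $n_0 \mapsto (q n_0 + 1)/2$, and the remaining $p - 1$ even steps halve this back to $n_0$, giving $(q n_0 + 1)/2 = 2^{p-1} n_0$ and again $(2^p - q)\,n_0 = 1$.
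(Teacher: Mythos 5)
Your proof is correct and follows essentially the same route as the paper: substitute $P_p=1$ into the second periodicity condition of Lemma \ref{lemma:second_periodicity} to get $(2^p-q)n_0=1$ and conclude $n_0=1$, $q=2^p-1$. You are in fact slightly more careful than the paper, which leaves implicit both the justification that $g(0)=0$ and the appeal to Theorem \ref{theorem:Mersenne_cycles} for the converse direction.
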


\begin{proof}
From the \emph{second periodicity condition}, equation \ref{eq:periodicity_2}, with $P_p = 1$, we have

\[ (2^p - q)n_0 = 1. \]

The unique possible solution is $n_0 = 1$ and $q= 2^p - 1$, a Mersenne number.
\end{proof}

\begin{proposition}\label{prop:trivial_cycle}
	The function $F_q(x)$ has a trivial cycle with total parity $P_p=2$ if and only if $q=5$.
\end{proposition}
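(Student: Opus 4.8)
The plan is to prove the two implications separately, in both cases feeding the data $n_0=1$ and $P_p=2$ into the second periodicity condition \ref{eq:periodicity_2}. The $(\Leftarrow)$ direction is immediate: for $q=5$ I would simply exhibit the orbit $T_5\colon 1\mapsto 3\mapsto 8\mapsto 4\mapsto 2\mapsto 1$, a cycle of prime period $p=5$ whose only odd members are $1$ and $3$, so that $P_p=2$; equivalently $F_5$ has the trivial cycle $\{9,25,65,33,17\}$.

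For $(\Rightarrow)$ I would first set $n_0=1$, $P_p=2$ in \ref{eq:periodicity_2}. Since the two ones of the parity vector sit at positions $g(1)=0$ and $g(0)=s-1$, the sum on the right collapses and the condition becomes
\[ 2^p - q^2 = 2^{s-1} + q, \qquad\text{i.e.}\qquad 2^p = q^2 + q + 2^{s-1}. \]
The first real step is a $2$-adic valuation argument. Because $q$ is odd, $v_2(q^2+q)=v_2(q+1)=:a$, and since $2^p=q^2+q+2^{s-1}>q^2>q+1$ forces $p>\log_2(q+1)\ge a$ while $s-1\le p-2<p$, the expression $q^2+q+2^{s-1}$ can be a power of two only if $s-1=a$. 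Writing $q+1=2^a d$ with $d$ odd (so $q=2^a d-1$), the right side factors as $2^p=2^a(qd+1)$, whence
\[ qd+1 = 2^{p-a}, \qquad\text{equivalently}\qquad 2^a d^2 - d + 1 = 2^{p-a}. \]
Here $d=(q+1)/2^a\ge 3$ is forced for a genuine $P_p=2$ cycle, since $d=1$ is precisely the Mersenne ($P_p=1$) situation of Proposition \ref{prop:cycle_parity_one}; this is consistent with Lemma \ref{lemma:Q_cycles}, which already gives $q\mid 2^{p-a}-1$.

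The decisive reformulation is to complete the square: multiplying by $2^{a+2}$ and using $2q+1=2^{a+1}d-1$ converts the displayed equation into a generalized Ramanujan--Nagell equation
\[ (2q+1)^2 + \left(2^{a+2}-1\right) = 2^{p+2}, \qquad 2^{a+2}-1\equiv 7 \pmod 8. \]
When $a=1$ this is the classical Ramanujan--Nagell equation $X^2+7=2^n$, whose only solutions have $X\in\{1,3,5,11,181\}$; imposing $X=2q+1$ odd with $q\ge 3$ leaves only $X=11$, that is $q=5$ (the value $X=181$ yields the even $q=90$). This disposes of every $q\equiv 1\pmod 4$.

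The main obstacle is the case $a\ge 2$, i.e.\ $q\equiv 3\pmod 4$. Here I would either invoke Beukers' effective results on $X^2+D=2^n$ for $D\equiv 7\pmod 8$, supplemented by the extra constraint $v_2(X+1)=a+1$, or attempt a self-contained exclusion by climbing the $2$-adic ladder: from $2^a d^2-d+1\equiv 0\pmod{2^{a+2}}$ one gets $d\equiv 2^a+1\pmod{2^{a+2}}$, and the minimal admissible value $d=2^a+1$ gives $2^a d^2-d+1=2^{2a+1}(2^{a-1}+1)$, whose odd factor $2^{a-1}+1>1$ for $a\ge 2$ prevents it from being a power of two (the coincidence $2^{a-1}+1=2$ at $a=1$ is exactly what produces $q=5$). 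The remaining values of $d$ are pinned into the narrow window $2^{(p-2a)/2}<d<\sqrt{4/3}\,\cdot 2^{(p-2a)/2}$ coming from $0<d^2-2^{p-2a}=(d-1)/2^a<d^2/4$. I expect handling all of these larger $d$ to be the genuinely hard part, because the natural substitution $d\mapsto e=(d-1)/2^a$ sends the equation $(2q+1)^2+(2^{a+2}-1)=2^{p+2}$ back to itself rather than to a strictly smaller instance, so no elementary descent terminates and one is squarely reduced to the nonexistence of nontrivial solutions of this generalized Ramanujan--Nagell family.
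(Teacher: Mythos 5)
Your reduction is sound as far as it goes, and it takes a genuinely different route from the paper: after the common starting point $2^p=q^2+q+2^{s-1}$ (with $g(0)=s-1$), you fix $s-1=v_2(q+1)=:a$ by a $2$-adic valuation argument and complete the square to reach the generalized Ramanujan--Nagell equation $(2q+1)^2+(2^{a+2}-1)=2^{p+2}$, whereas the paper runs an elementary descent through the successive substitutions $\sigma_0=1+2^{g(0)}\sigma_1$, $\sigma_1=1+2^{g(0)+1}\sigma_2,\dots$ applied to $\sigma_0(2^{g(0)}\sigma_0-1)=2^{p-g(0)}-1$. Your handling of $a=1$ (i.e.\ $q\equiv 1\bmod 4$) via the classical $X^2+7=2^n$, whose solutions $X\in\{1,3,5,11,181\}$ leave only $q=5$, is clean and complete, as is the exclusion of $d=1$ as the doubled Mersenne cycle.

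The gap is the case $a\ge 2$ (i.e.\ $q\equiv 3\bmod 4$), which you explicitly leave open; as written the forward implication is proved only for half the residue classes. The ``narrow window'' for $d$ and the remark that the substitution $d\mapsto(d-1)/2^a$ maps the equation to itself are accurate observations but exclude nothing. The gap is in fact closable along exactly the line you gesture at, but you would have to actually invoke the result rather than name it: Beukers' classification of $x^2+D=2^n$ shows that for $D=2^k-1$ with $k\ge 4$ the only positive solutions are $x=1$ (with $2^n=2^k$) and $x=2^{k-1}-1$ (with $2^n=2^{2k-2}$), these being precisely the exceptional two-solution family in his theorem. With $k=a+2\ge 4$ both solutions are already discarded in your setup: $X=1$ gives $q=0$, and $X=2^{a+1}-1$ gives $q=2^a-1$, i.e.\ $d=1$, the Mersenne case. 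You neither state this theorem nor verify that its two solutions coincide with the ones you have excluded, and your proposed alternative, a self-contained $2$-adic climb, fails to terminate by your own admission. Until one of these routes is carried through, the proposition remains unproved for $q\equiv 3\bmod 4$. (For comparison, the paper's own descent is elementary and avoids Ramanujan--Nagell entirely, though its termination claim $0\le\sigma_{j+1}<\sigma_j$ and the final parity step are themselves stated rather tersely.)
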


\begin{proof}
We restrict ourselves to the search of trivial cycles, which are solutions of equation \ref{eq:periodicity_2} with $n_0=1$, that is 
\begin{equation}\label{eq:periodicity_trivial}
	2^p = q^{P_p} + \sum_{j=0}^{P_p-1} 2^{g(j)}q^j, \quad p, P_p \in \mathbb Z^{+}, \quad p >1,
\end{equation}

If $P_p=2$, equation \ref{eq:periodicity_trivial} reads $2^p = q^2 + q + 2^{g(0)}$, which means that there exists some odd $\sigma_0 \in \mathbb N$ such that

\begin{equation*}
	\sigma_0 = \frac{2^{p-g(0)}-1}{q} = \frac{q+1}{2^{g(0)}} = \frac{2^{p-g(0)} + q}{2^{g(0)} + q}, 
\end{equation*}
or
\begin{equation}\label{eq:sigma_0}
	\sigma_0 \left(2^{g(0)} \sigma_0 - 1 \right) = 2^{p-g(0)} - 1. 
\end{equation}

Obviously, for a valid solution, $g(0)\geq 1$, $p-2g(0)\geq 0$, and $2^{g(0)}$ divides $\sigma_0 - 1$. Hence, the change of variable $\sigma_0 = 1 + 2^{g(0)}\sigma_1$ for some $\sigma_1 \in \mathbb N$ leads to

\[ \sigma_1(2^{2g(0)}\sigma_1 + 2^{g(0)+1}-1) = 2^{p-2g(0)}-1. \]

If $\sigma_1 = 0$, then $p-2g(0) = 0$ and $q=2^{g(0)}-1$. But, in that case, $q$ is a Mersenne number with a trivial cycle of period $p = g(0)$ (theorem \ref{theorem:Mersenne_cycles}), a contradiction. Therefore, $p-4g(0)\geq 0$ and $2^{g(0)+1}$ divides $\sigma_1-1$. Thus, in the above equation, the change of variable $\sigma_1 = 1 + 2^{g(0)+1}\sigma_2$ for some $\sigma_2 \in \mathbb N$ results 

\[ \sigma_2 \left(2^{3g(0)+1}\sigma_2 + 2^{2g(0)+1} + 2^{g(0)+1} - 1\right) = 2^{p-3g(0)-1} - 2^{g(0)-1} - 1. \]

If $\sigma_2 = 0$, then $g(0)=1$ ($s=2)$, $p=5$, and $q=5$, which leads to the trivial cycle

\[ \{X_5(1)=9, F_5(9)=25, F_5(25)=65, F_5(65)=33, F_5(33)=17\}. \]

If $\sigma_2$ is a positive odd number, then $2^{g(0)-1}$ divides $\sigma_2 - 1$, and again, the change of variable $\sigma_2 = 1 + 2^{g(0)-1}\sigma_3$ for some $\sigma_3 \in \mathbb N$, results

\begin{align*} 
	\sigma_3 \left(2^{4g(0)}\sigma_3 + 2^{3g(0)+2} + 2^{2g(0)+1} + 2^{g(0)+1} - 1\right) = \\
 2^{p-4g(0)} - 2^{2g(0)+2} - 2^{g(0)+2} - 2^2 - 1. 
\end{align*}

Therefore, we repeat this argument as many times as necessary, where $0 \leq \sigma_{j+1} < \sigma_j < \sigma_0 $ for all $j \in \mathbb Z^+$, until $\sigma_n = 0$ for some $n \in \mathbb Z^+$. Then, the above equation results,
\[
	 1 + \sum_{j=1}^{m-1} 2^{(a_j g(0) + b_j)} = 2^{p-(a_m g(0) + b_m)},
\]
for some $m \in \mathbb Z^+$, $n \leq m$, and for some sequences $\{a_j\}_1^{m}$ and $\{b_j\}_1^{m}$ of integer numbers such that $\forall j$, $p > a_{j+1} g(0) + b_{j+1} > a_j g(0) + b_j \geq 0$.

Finally, from a parity (odd/even) argument, we have that
\[
	\sum_{j=1}^{m-1} 2^{(a_j g(0) + b_j)} = 1,
\]
thus $m=2$, $a_1= b_1=0$, and $p = 1 + a_2 g(0) + b_2$. Since $n \leq 2$, $\sigma_1 = 0$ or $\sigma_2 = 0$, which are the cases already analyzed. Therefore, the unique possible solution of equation \ref{eq:periodicity_trivial} for $P_p = 2$ is $q=5$, $p=5$, and $g(0)=1$.
\end{proof}

\begin{remark}\label{remark:trivial_cycles}
The general case $P_p\geq 3$ is much more difficult to analyze and seems intractable. My guess is that equation \ref{eq:periodicity_trivial} with $P_p\geq 3$ has no valid solutions for $q$, hence there are no more trivial cycles (see conjecture \ref{conjecture_trivial_cycles} in the next section).
\end{remark}

\subsection{Conjectures about cycles}\label{subsec:conjectures_cycles}
Along this work arose questions that remain open. This section groups these questions in the form of conjectures, with the hope that further development could solve them. Before going to their description, we present a useful definition.  

\begin{definition}\label{def:counting_function}(Counting function)
The function $\pi(F_q)$ counts the number of cycles of the function $F_q(x)$ for all $x \in \mathbb Z_{cq}$.
\end{definition}

\noindent From theorem \ref{theorem:main_periodicity} we have that, for each odd number $q\geq 3$, $\pi(F_q)$ is finite and bounded by a function that depends on $q$. Bellow, we list three main conjectures about the counting function and the trivial cycles. 

\begin{conjecture}
	There are infinitely many values of $q$ such that $\pi(F_q)=0$. 
\end{conjecture}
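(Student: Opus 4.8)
\medskip
\noindent\textbf{Proof proposal.}
The plan is to exhibit an \emph{explicit} infinite family $\mathcal Q$ of odd numbers and to prove, using only the necessary conditions already established, that $F_q$ has no cycle at all for every $q\in\mathcal Q$. I would take $\mathcal Q$ to consist of composite, non-Mersenne $q$; such $q$ are automatically $\neq 5$ and there are infinitely many of them. The cycle count $\pi(F_q)$ is then attacked class by class over the $q-1$ congruent classes $h\in\{1,\dots,q-1\}$ of Theorem \ref{theorem:main_periodicity}. For the trivial cycle, which sits in class $h=1$, Proposition \ref{prop:cycle_parity_one} rules out total parity $P_p=1$ (since $q$ is not Mersenne) and Proposition \ref{prop:trivial_cycle} rules out $P_p=2$ (since $q\neq 5$); the residual possibility $P_p\geq 3$ is a class-$1$ cycle and is subsumed by the general argument below.

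For the remaining classes I would first compress the list of candidates. Since $q$ is composite, Proposition \ref{prop:non_existence_cycles} forbids every class $h$ that shares a nontrivial divisor with $q$, and Lemma \ref{lemma:Q_cycles} confines the survivors to the classes with $h\equiv 2^{-k}\pmod q$, i.e.\ to the at most $\mathrm{ord}_q(2)$ classes lying in the cyclic subgroup $\langle 2\rangle\leq(\mathbb Z/q\mathbb Z)^{\ast}$. For each such class, Lemma \ref{lemma:periodicity_limits} forces the parity coefficient $\mu_p=P_p/p$ into the one-sided interval $I_q=(\log_q(2-1/q),\log_q 2)$, whose length is $\sim 1/(2q\ln q)$. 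The next step is to choose $\mathcal Q$ so that $\log_q 2=\ln 2/\ln q$ admits no coprime rational approximant $P_p/p$ with $p>1$ from below of this quality and small denominator; this yields an effective \emph{lower} bound on the period $p$ of any hypothetical cycle. Finally I would substitute that lower bound into the second periodicity condition \ref{eq:periodicity_2} and combine it with the seed constraint $x_0=X_q(h+\lambda_h q)\geq 2q-1$, aiming to show that no admissible seed $n_h=h+\lambda_h q$ can solve the Diophantine exponential equation, thereby emptying every candidate class and giving $\pi(F_q)=0$.

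The delicate point — and the step I expect to be the genuine obstacle — is the simultaneous construction of the family $\mathcal Q$. What is required is a guarantee that, for infinitely many $q$ at once, the number $\ln 2/\ln q$ resists rational approximation from below of the precise quality dictated by \ref{eq:cycle_q_limits}; equivalently, that the linear form $P_p\ln q - p\ln 2$ stays bounded away from zero on the scale demanded by $I_q$ for all small $p$. This is a transcendence-theoretic statement, naturally approached through Baker-type lower bounds for linear forms in logarithms and through controlling the continued-fraction expansion of $\ln 2/\ln q$ uniformly along a sparse sequence of $q$. Reconciling such effective approximation estimates with the rigid arithmetic of \ref{eq:periodicity_2} is where the real difficulty lies, and it is far from clear that the present techniques suffice; this is precisely why the statement is posed as a conjecture.
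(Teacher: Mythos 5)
You have not proved the statement, and neither does the paper: this is posed there as an open conjecture (Section \S\ref{subsec:conjectures_cycles}), with no proof offered, so there is nothing in the source to match your argument against. Your proposal is a strategy outline that assembles the right ingredients — Theorem \ref{theorem:main_periodicity} to reduce to finitely many congruent classes, Proposition \ref{prop:non_existence_cycles} and Lemma \ref{lemma:Q_cycles} to prune classes, Lemma \ref{lemma:periodicity_limits} to constrain $\mu_p$, and the second periodicity condition \ref{eq:periodicity_2} as the final arbiter — but it stops exactly where the mathematical content would have to begin, and you say so yourself in the last paragraph. That candor is to your credit, but it means the proposal establishes nothing.

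Two concrete reasons the plan, as described, cannot close. First, the Diophantine-approximation step only yields a \emph{lower} bound on the period $p$ of a hypothetical cycle: by Dirichlet-type arguments, one-sided rational approximants $P_p/p<\ln 2/\ln q$ of quality $1/p^2$ always exist, so once $p\gtrsim\sqrt{2q\ln q}$ the window $I_q$ of Lemma \ref{lemma:periodicity_limits} admits candidates no matter how you choose $q$; hence no choice of $\mathcal Q$ can make the approximation obstruction exclude all periods, and you are left having to refute \ref{eq:periodicity_2} for arbitrarily large $p$ with no stated mechanism. Second, even granting the class-pruning, you must show that \emph{none} of the finitely many admissible pairs $(h,\lambda_h)$ in the surviving classes solves \ref{eq:periodicity_2}, simultaneously for infinitely many $q$; the paper's only tools for actually emptying a class (Theorem \ref{theorem:Mersenne_cycles}, Propositions \ref{prop:cycle_parity_one} and \ref{prop:trivial_cycle}) exploit the special form $q=2^m-1$ or the constraint $P_p\le 2$ and do not extend to the composite non-Mersenne family you propose. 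The missing idea is precisely the one the conjecture asks for.
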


\begin{conjecture}
	There exists some $q*$ such that $\frac{\pi(F_q)}{q}\ll 1$ for all $q > q*$.  
\end{conjecture}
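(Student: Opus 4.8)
The plan is to bound the counting function $\pi(F_q)$ of Definition~\ref{def:counting_function} by summing over the congruent classes supplied by Theorem~\ref{theorem:main_periodicity} and then arguing that only a vanishing fraction of them can actually host a cycle. First I would use Proposition~\ref{prop:non_existence_cycles} to discard, for composite $q$, every class $h$ sharing a nontrivial factor with $q$, so that all surviving classes consist of residues coprime to $q$. Next I would invoke Lemma~\ref{lemma:Q_cycles}: a cycle in class $h$ forces $q \mid h2^{k}-1$, i.e. $h \equiv 2^{-k} \pmod q$ for some $k$. Hence every admissible class lies in the cyclic subgroup $\langle 2 \rangle \subseteq (\mathbb{Z}/q\mathbb{Z})^{\times}$, and the number of candidate classes is at most $\operatorname{ord}_q(2)$. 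This already cuts the count whenever $2$ has small multiplicative order modulo $q$.

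The second ingredient is a bound on the number of cycles inside a single class. Within class $h$ the proof of Theorem~\ref{theorem:main_periodicity} furnishes an injection $\lambda \mapsto \mu(\lambda)$ between admissible parameters and parity coefficients, with $\mu(\lambda)$ increasing toward $\log_q 2$. On the other hand, Lemma~\ref{lemma:periodicity_limits} confines every cycle's parity coefficient to the window
\[
  I_q = \left( \log_q\!\left(2 - \tfrac{1}{q}\right),\ \log_q 2 \right),
\]
whose length is only $\sim \tfrac{1}{2q\ln q}$. Since the $\mu(\lambda)$ are distinct rationals $P_p/p$ trapped in the short interval $I_q$, I would combine the two-sided control $0 < \log_q 2 - \mu(\lambda) < \frac{1}{\ln q\,\bigl(X_q(h)+2q(q-1)\lambda\bigr)}$ with the lower endpoint of $I_q$ to bound the number of surviving $\lambda$ per class by a slowly growing function of $q$ (ideally $O(1)$), yielding $\pi(F_q) \le \operatorname{ord}_q(2)\cdot O(1)$.

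The hard part is that this last estimate is not by itself sublinear: by Artin-type heuristics $\operatorname{ord}_q(2)$ equals $q-1$ for a positive proportion of primes $q$, so bounding the number of \emph{candidate} classes cannot alone give $\pi(F_q)/q \to 0$. The genuine obstacle is therefore to show that, among the up-to-$\operatorname{ord}_q(2)$ classes passing the necessary conditions, only $o(\operatorname{ord}_q(2))$ actually contain a solution of the exponential Diophantine equation~\ref{eq:periodicity}. The constraint $0 < 2 - q^{\mu_p} < 1/q$ of Lemma~\ref{lemma:periodicity_limits} is equivalent to $|p\ln 2 - P_p\ln q| \lesssim 1/q$, an extraordinarily good simultaneous approximation; for periods $p \lesssim \sqrt{q\ln q}$ this forces $P_p/p$ to be a continued-fraction convergent of $\ln 2/\ln q$, of which there are only $O(\log q)$, so short cycles are provably rare. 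Controlling the long-period cycles, however, seems to require an effective lower bound on the linear form $|p\ln 2 - P_p\ln q|$ that is uniform in $q$ --- precisely the Baker-type transcendence input exploited by Simons and de~Weger \cite{simons_theoretical_2005} for $q=3$, but now needed across all $q$ at once. I expect this uniform sparsity of Diophantine solutions to be the decisive difficulty, which is presumably why the statement is recorded here as a conjecture rather than proved as a theorem.
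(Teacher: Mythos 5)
First, be aware that the paper offers no proof of this statement: it is recorded verbatim as an open conjecture in \S\ref{subsec:conjectures_cycles}, immediately after the observation that Theorem \ref{theorem:main_periodicity} only yields a finite, $q$-dependent bound on $\pi(F_q)$. So there is no argument of the paper's to compare yours against; the relevant question is whether your sketch could close the gap on its own. It cannot, and you correctly identify the fatal point yourself: the necessary conditions you assemble (Proposition \ref{prop:non_existence_cycles}, Lemma \ref{lemma:Q_cycles}) only confine the admissible classes $h$ to the subgroup generated by $2$ in $(\mathbb Z/q\mathbb Z)^{\times}$, and $\mathrm{ord}_q(2)$ equals $q-1$ for (conjecturally) a positive proportion of primes $q$, so the resulting bound $\pi(F_q)\le \mathrm{ord}_q(2)\cdot O(1)$ is not $o(q)$ in general. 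A proof would have to show that almost all of the residues $2^{-k}\bmod q$ fail to carry an actual solution of the exponential Diophantine equation \ref{eq:periodicity}, and nothing in the paper's toolkit accomplishes that.

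Two further soft spots. The ``$O(1)$ cycles per class'' step leans on the correspondence $\lambda\mapsto\mu(\lambda)$ from the proof of Theorem \ref{theorem:main_periodicity}, but that proof establishes only finiteness of the admissible $\lambda$ (and even that via a heuristic ``weighted average'' step asserting $\phi_j=0$ for large $j$), with a threshold $\lambda^{*}$ that is never shown to be uniform in $q$ or in the class $h$; you would need an explicit, $q$-uniform bound before multiplying it by the class count. Also, your linear-form estimate should read $|p\ln 2 - P_p\ln q|\lesssim p/q$ rather than $1/q$: inequality \ref{eq:cycle_q_limits} controls $\ln 2-\mu_p\ln q$, which must be multiplied by $p$. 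The convergent criterion $p\lesssim\sqrt{q\ln q}$ survives this correction, but the long-period regime is exactly where the required $q$-uniform Baker-type lower bound (the analogue of \cite{simons_theoretical_2005} for all $q$ simultaneously) is unavailable, as you note. In short: your reductions are consistent with the paper's lemmas and sharpen the picture, but they do not constitute a proof, and the statement remains, as the paper labels it, a conjecture.
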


\begin{conjecture}\label{conjecture_trivial_cycles}
	The function $F_q(x)$ has a trivial cycle only in the cases $q=5$ and $q=2^p-1$, $p\geq 2$.
\end{conjecture}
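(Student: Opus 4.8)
The plan is to classify the trivial cycle --- the orbit of $x_0=2q-1=X_q(1)$, which is unique for each $q$ --- by its total parity $P_p$, exploiting that a prime period satisfies $\gcd(p,P_p)=1$ (the remark following Definition \ref{def:cycle}). The cases $P_p=1$ and $P_p=2$ are already settled: Proposition \ref{prop:cycle_parity_one} shows $P_p=1$ occurs exactly for the Mersenne numbers, and Proposition \ref{prop:trivial_cycle} shows $P_p=2$ occurs exactly for $q=5$; moreover Theorem \ref{theorem:Mersenne_cycles} guarantees a Mersenne $q$ carries no further cycle. Hence the whole conjecture reduces to a single claim: the trivial periodicity equation \ref{eq:periodicity_trivial}, namely $2^p=q^{P_p}+\sum_{j=0}^{P_p-1}2^{g(j)}q^j$, has no solution with $P_p\geq 3$ and $\gcd(p,P_p)=1$. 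The coprimality is essential here: dropping it admits spurious solutions such as $p=9$, $P_p=3$, $q=7$ (indeed $7^3+7^2+2^3\cdot 7+2^6=512$), which merely retrace the prime-period-$3$ Mersenne cycle of $q=7$ three times.

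First I would attempt a $2$-adic descent generalizing the argument of Proposition \ref{prop:trivial_cycle}. Setting $k=p-g(0)$, equation \ref{eq:periodicity_trivial} forces $q\mid 2^{k}-1$ (this is the $h=1$ case of Lemma \ref{lemma:Q_cycles}), so one may introduce $\sigma_0=(2^{k}-1)/q\in\mathbb N$ and reduce \ref{eq:periodicity_trivial} to a relation of the same shape as \ref{eq:sigma_0}. The engine of the descent is that at each stage a power of $2$ is forced to divide $\sigma_i-1$, licensing a substitution $\sigma_i=1+2^{e_i}\sigma_{i+1}$ that produces a strictly smaller nonnegative $\sigma_{i+1}$. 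Because $\sigma_0>\sigma_1>\cdots\geq 0$, the descent must terminate at some $\sigma_n=0$, and at that terminal step a parity (odd/even) argument together with the size constraint $2^p>q^{P_p}$ should pin the exponent configuration down to the already-known $P_p\leq 2$ solutions, giving the contradiction for $P_p\geq 3$.

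The hard part will be controlling the combinatorics of the inner exponents. For $P_p\geq 3$ the function $g$ contributes $P_p-1$ strictly decreasing free exponents $g(0)>g(1)>\cdots>g(P_p-2)>g(P_p-1)=0$, whereas the $P_p=2$ case of Proposition \ref{prop:trivial_cycle} had only the single free exponent $g(0)$. The descent then branches according to the gaps $g(i)-g(i+1)$, and the resulting case tree grows with $P_p$; the real difficulty is to keep every branch strictly descending and to close them uniformly rather than one value of $P_p$ at a time. As a backstop that at least makes the problem finite, I would invoke Lemma \ref{lemma:periodicity_limits}: the bound \ref{eq:cycle_q_limits}, $0<2-q^{P_p/p}<1/q$, is a linear form in two logarithms, $|p\log 2-P_p\log q|$, that is exponentially small in $p$, so Baker's theorem on linear forms in logarithms bounds $p$, $P_p$ and $q$ effectively and reduces the conjecture to a finite check. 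The residual obstacle is quantitative: Baker's constants leave an astronomically large search range, so a genuine proof still needs the descent above to collapse that range to something verifiable.
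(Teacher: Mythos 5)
The statement you are asked to prove is not proved in the paper at all: it is stated as a conjecture, and Remark \ref{remark:trivial_cycles} explicitly concedes that the remaining case ``seems intractable.'' Your reduction is the correct one and matches the paper's own framing exactly: Proposition \ref{prop:cycle_parity_one} disposes of $P_p=1$, Proposition \ref{prop:trivial_cycle} of $P_p=2$, and everything hinges on showing that equation \ref{eq:periodicity_trivial} has no admissible solution with $P_p\geq 3$. Your observation that the coprimality $\gcd(p,P_p)=1$ must be imposed to kill spurious solutions such as $p=9$, $P_p=3$, $q=7$ is a genuinely useful refinement that the paper does not make explicit (though note that the coprimality itself rests only on an unproved remark following Definition \ref{def:cycle}).

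The gap is that neither of your two mechanisms actually closes the $P_p\geq 3$ case, and you say so yourself. The descent of Proposition \ref{prop:trivial_cycle} works because with $P_p=2$ there is a single free exponent $g(0)$ and each substitution $\sigma_i=1+2^{e_i}\sigma_{i+1}$ produces one new controlled power of $2$; with $P_p\geq 3$ the $P_p-1$ independent exponents $g(0)>g(1)>\cdots>g(P_p-2)$ make the branching of the case tree grow with $P_p$, and you give no invariant that closes all branches uniformly --- this is precisely the obstruction the paper's author identifies as intractable. The Baker backstop is also weaker than you suggest: the bound \ref{eq:cycle_q_limits} controls the linear form $|p\ln 2 - P_p\ln q|$ only in terms of $q$, so for fixed $q$ it bounds $p$ and $P_p$, but the conjecture quantifies over all odd $q$, and Baker's theorem does not by itself reduce an infinite family of $q$ to a finite check; even for each fixed $q$ the resulting range is, as you admit, far beyond verification. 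What you have written is a sound research program that correctly isolates the open problem, but it is not a proof, and the paper contains none to compare it against.
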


\section{Asymptotic behavior of the sequences}\label{sec:asymptotic}

Equation \ref{eq:xk_2} is the starting point in the study of the asymptotic behavior of the sequences of the function $F_q(x)$. From that equation, lemma \ref{lemma:bounds} gives explicit bounds to the highest and lowest values of the sequences, which are used to proof the main results of this section, theorems \ref{theorem:equiparity} and \ref{theorem:divergence}. But first, we begin with a basic result related to the divergence of the sequences.


\begin{definition}\label{def:purely_divergent_seq}
	(Purely divergent sequence) For a given $q \geq 3$, the sequence $S_q(x)=\{F_q^j(x)\}_{j \in \mathbb N}$ is said to be purely divergent if $\alpha_q(F_q^j(x))=1$ for all $j \in \mathbb N$. In other words, the related parity sequence is an infinite sequence of one's
	\[ A(q;x) = \{1,1,\ldots, 1, \ldots\}. \]
\end{definition}

\begin{lemma}\label{lemma:purely_divergent}
	There isn't exist a purely divergent sequence in the algorithm $qn+1$.
\end{lemma}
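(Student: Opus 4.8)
The plan is to argue by contradiction. Suppose a purely divergent sequence $S_q(x)$ exists, so that $\alpha_q(x_j)=1$ for every $j\in\mathbb N$. Then the parity vector of \emph{every} length $k$ is the all-ones string, whence $P_k=k$ and $|\mathbf A_k|_j^{k-1}=k-j$ for each $0\le j\le k-1$. Substituting these values into the closed formula \ref{eq:xk_1} of proposition \ref{prop:formula_xk} turns the tail sum $\sum_{j=0}^{k-1}2^jq^{k-j}$ into a finite geometric series with ratio $2/q$, which I can evaluate in closed form (here $q\ge 3$ guarantees $2/q<1$). After clearing denominators this should collapse to a single clean identity of the shape $C\,q^k=2^k\bigl((q-2)x_k+q\bigr)$, where $C:=(q-2)x_0+q$ is a fixed positive integer independent of $k$.

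The decisive step is then a $2$-adic (parity) observation rather than a growth estimate. Since $q$ is odd, $q^k$ is odd, so the left-hand side carries no factor of $2$, while the right-hand side is divisible by $2^k$. Hence $2^k\mid C$ would have to hold for every $k\in\mathbb Z^+$, which is impossible for a fixed positive integer $C$: indeed $x_0$ is odd and $q-2$ is odd, so $C$ is even and in particular nonzero, yet no nonzero integer is divisible by arbitrarily large powers of $2$. This contradiction rules out the purely divergent sequence.

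I would emphasise that the obstruction is arithmetic, not dynamical. The lower bound \ref{eq:lower_limit} already shows that with $\mu_k=1$ one has $F_q^k(x)/x>(q/2)^k\to\infty$, so such a sequence would genuinely diverge; divergence per se is not the difficulty. The entire content of the lemma is that no positive integer seed can sustain the odd branch forever, and this is forced solely by the integrality of the iterates $x_k\in\mathbb Z_{cq}$.

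The only real care point is the geometric-sum bookkeeping, so that the power of $2$ and the power of $q$ separate cleanly in the final identity; everything after that is a one-line parity remark. As an alternative that keeps the constants simplest, I may instead run the argument in the original coordinates via the conjugacy of lemma \ref{lemma:conjugacy}: the all-odd recursion $n_{j+1}=(qn_j+1)/2$ solves to $(q-2)n_k+1=(q/2)^k\bigl((q-2)n_0+1\bigr)$, yielding the same contradiction that $2^k$ must divide the fixed positive integer $(q-2)n_0+1$ for all $k$. I would present whichever of the two coordinate systems produces the tidier constant.
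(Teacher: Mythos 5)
Your argument is correct, and it takes a genuinely different route from the paper's. The paper's proof is a shift-invariance argument: if $S_q(x)$ has the all-ones parity sequence, so does $S_q(y)$ for $y=F_q(x)$, and the uniqueness of the infinite parity sequence (Lemma \ref{lemma:parity_sequence}, part ii) forces $x=y=q(x+1)/2$, which has no solution in $\mathbb Z_{cq}$. You instead substitute $P_k=k$ and $|\mathbf A_k|_j^{k-1}=k-j$ into formula \ref{eq:xk_1}, sum the geometric series, and arrive at the identity $2^k\bigl((q-2)x_k+q\bigr)=q^k\bigl((q-2)x_0+q\bigr)$, whence $2^k$ must divide the fixed positive integer $C=(q-2)x_0+q$ for every $k$ --- impossible. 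Both proofs ultimately rest on Proposition \ref{prop:formula_xk}, but yours is self-contained and does not invoke Lemma \ref{lemma:parity_sequence}(ii) at all; it is the classical ``all-odd trajectory forces $2^k\mid n_0+1$'' argument, and your reformulation in the original coordinates makes that explicit. The paper's version is shorter once Lemma \ref{lemma:parity_sequence} is available, but yours is arguably more elementary. One small wording slip: you write that the left-hand side $Cq^k$ ``carries no factor of $2$,'' yet as you yourself note $C$ is even; what you actually use (and what is correct) is that $v_2(Cq^k)=v_2(C)$ is bounded while $v_2$ of the right-hand side is at least $k$, so no harm is done.
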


\begin{proof}
	Given an odd number $q\geq 3$, let $x\in \mathbb Z_{cq}$ be the seed of a purely divergent sequence $S_q(x)=\{x, F_q(x), F^2_q(x),\ldots \}$ with the parity sequence $A(q;x)=\{1,1,\ldots \}$.	Let $y=F_q(x)$ be the next iterate of $x$. Its parity sequence $A(q;y)=\{1,1,\ldots\}$ is exactly the same as $A(q;x)$, so applying lemma \ref{lemma:parity_sequence}, we have that $x = y = q(x+1)/2$, which is impossible.
\end{proof}

In what follows, we show the proofs of lemma \ref{lemma:bounds} and theorems \ref{theorem:equiparity} and \ref{theorem:divergence}. They are essential to justify the convergence of all sequences for $q=3$ (Collatz conjecture), and the divergence of almost all sequences for $q\geq 5$ (Crandall conjecture).

\subsection{Proof of lemma \ref{lemma:bounds}}\label{subsec:proof_lemma_bounds}

Using equation \ref{eq:xk_2} for the sequence $S^k_q(x_0)=\{x_j\}_0^{k-1}$, $x_0 \in \mathbb Z_{cq}-\{2q-1\}$, we have the lower bound
	\[ x_0 \left(\frac{q^{\mu_k}}{2}\right)^k < F^k_q(x_0) = x_0 \left(\frac{q^{\mu_k}}{2}\right)^k \prod_{j=0}^{k-1} \left(1 + \frac{1}{x_j} \right). \]
	
 Our purpose is to find the upper bound of the above productory, which we call \emph{the harmonic productory} of the sequence. From theorem \ref{theorem:Mersenne_cycles} we know that $F_q(x)$, $q=2^p-1$, has only one periodic sequence, the trivial cycle 

\[ \{X_q(1),X_q(2^{p-1}),X_q(2^{p-2}),\ldots,X_q(2)\}. \] 

In the case $q=3$ ($p=2$), it is clear that this sequence is the lowest possible one, and the harmonic productory is bounded by 
\begin{align}
	\prod_{j=0}^{k-1} \left(1 + \frac{1}{x_j} \right) &\leq \left(1 + \frac{1}{X_3(1)} \right)^{k/2} \left(1 + \frac{1}{X_3(2)} \right)^{k/2} = \frac{2^k}{3^{k/2}}. \nonumber 
\end{align}
Now, let us study the general case $q=2^p-1$ with $p \geq 3$. For convenience, let $k$ be a positive integer number such that $k > 2^{p-1} - p$, and let the set $\mathcal C_p$ be all the integer numbers between 1 and $2^{p-1}$ that are not powers of 2. Then,

\[ \mathcal C_p = \bigcup_{h=1}^{p-2} \mathcal C_h, \quad \text{ with } \mathcal C_h = \left\{ n \in \mathbb N : \; 2^h+1 \leq n \leq 2^{h+1} - 1 \right\}, \] 
and we define $m_h = \#\mathcal C_h=2^h -1$, and $m_p = \#\mathcal C_p= \sum_{h=1}^{p-2} m_h$. Hence, the harmonic productory for all $X_q(n_j)$ with $n_j \in C_h$, is bounded by

\begin{align}
	\prod_{j=1}^{m_h} \left(1 + \frac{1}{X_q(n_j)} \right) &< \left(1 + \frac{1}{X_q(2^h)} \right)^{2^h-1} . \nonumber 
\end{align}

Let the sequence $S_q^k (x_0)$ be a non-periodic sequence that takes, among others, all the values $X_q(n)$, $n \in \mathcal C_p$, only once. Reordering the sequence in ascending order, $\hat{S}_q^k (x_{(0)}) = \{x_{(j)}\}_0^{k-1}$, with $x_{(j)} < x_{(j+1)}$, it results  
\begin{align}
	\prod_{j=0}^{k-1} \left(1 + \frac{1}{x_{(j)}} \right) &< \prod_{j=m_p}^{k-1} \left(1 + \frac{1}{x_{(j)}} \right) \prod_{h=1}^{p-2} \left(1 + \frac{1}{X_q(2^h)} \right)^{2^h-1} \nonumber \\
	 &<  \left(1 + \frac{1}{X_q(2^{p-1})} \right)^{k-m_p} \prod_{h=1}^{p-2} \left(1 + \frac{1}{X_q(2^h)} \right)^{2^h-1}. \nonumber
\end{align}

Since for all $h \in \{1, \ldots, p-2\}$, $X_q(2^h)^{-1} < X_q(2^\ell)^{-1}$, $0 \leq \ell<h$, we can safely redistribute as convenience the factors of the productory from those of higher powers to the lower ones, yet conserving the inequality's sign. Therefore, using the values of the trivial cycle as a reference, the harmonic productory is bounded by
\begin{align}
	\prod_{j=0}^{k-1} \left(1 + \frac{1}{x_{j}} \right) &\leq \left(1 + \frac{1}{X_q(1)} \right)^{k/p} \cdots \left(1 + \frac{1}{X_q(2^{p-1})} \right)^{k/p}, \nonumber 
\end{align}
for any sequence $S_q^k(x_0) = \{x_j\}_0^{k-1}$, $x_0 \in \mathbb Z_{cq}-\{2q-1\}$. The equality holds only when the sequence belongs to the trivial cycle (but starting at $x_0 \neq 2q-1$). 

Now, for a Mersenne number $q=2^p-1$, there is the recurrence relation $X_q(2^j)+1 = 2 X_q(2^{j-1})$, $j=1,\ldots,p-1$. Therefore,
\begin{align}
	\prod_{j=0}^{k-1} \left(1 + \frac{1}{x_j} \right) &\leq \left(2^{p-1} \frac{(X_q(1) + 1)}{X_q(2^{p-1})} \right)^{k/p} = \left(2^{p-1} \frac{2q}{q^2} \right)^{k/p} = \frac{2^k}{q^{k/p}}. \nonumber 
\end{align}

\noindent Finally, the function $F_q(x)$ is bounded by
\[ \frac{F^k_q(x_0)}{x_0} \leq \left(\frac{q^{\mu_k}}{2}\right)^k \frac{2^k}{q^{k/p}} = q^{(\mu_k-\frac{1}{p})k}. \]

\noindent By the other hand, the function $F_5(x)$ has the trivial cycle 
	\[ \{X_5(1)=9,X_5(3)=25,X_5(8)=65,X_5(4)=33,X_5(2)=17 \}, \] 

thus proceeding as before, with $k>5$ and the set $\mathcal C = \{5,6,7 \}$, where $X_5(n)^{-1} < X_5(4)^{-1}$ for all $n \in \mathcal C$, the harmonic productory reads
\begin{equation}
	1 < \prod_{j=0}^{k-1} \left(1 + \frac{1}{x_j} \right) \leq \left(2^{5} \frac{5 \cdot 13 \cdot 33 \cdot 17 \cdot 9}{9 \cdot 25 \cdot 65 \cdot 33 \cdot 17} \right)^{k/5} = \left( \frac{2^5}{5^2} \right)^{k/5} = \frac{2^k}{5^{2k/5}}, \nonumber 
\end{equation}
and
\[ \frac{F^k_5(x_0)}{x_0} \leq \left(\frac{5^{\mu_k}}{2}\right)^k \frac{2^k}{5^{2k/5}} = 5^{(\mu_k-\frac{2}{5})k}.\qed \]

\subsection{Proof of Theorem \ref{theorem:equiparity} (equiparity theorem)}\label{subsec:proof_equiparity_theorem}
	~\newline
		Let the sequence $S_3(x_0) = \{F^j_3(x_0)\}_{j \in \mathbb N}$ be any arbitrary sequence with parity sequence $A(3;x_0)= \{\alpha^j\}_{j\in \mathbb N}$, and the related sequence of parity coefficients $R(x_0) = \{\mu_{\ell}\}_{\ell \in \mathbb Z^+}$, where $\mu_{\ell} = (1 / \ell) \sum_{j=0}^{\ell-1} \alpha^j$ is the cumulative average of $A(3;x_0)$. The first thing we have to proof is the convergence of the sequence $R(x_0)$ for any seed $x_0 \in \mathbb Z_{c3}$, that is, the existence of the asymptotic parity coefficient $\mu_{\infty}$ (depending on the seed, in principle). We do a proof by contradiction.
	
	Let assume that the sequence $R(x_0)$ has no limit in $\mathbb R$ for some seed $x_0 \in \mathbb Z_{c3}$. This means that the sequence $S_3(x_0)$ never visits a cycle and must be ultimately divergent. The first conclusion, that never visits a cycle, follows from the contrary assumption, that is, the sequence effectively enters into a cycle and keeps looping indefinitely, thus the parity coefficient of the sequence approaches to the parity coefficient of the cycle, and the sequence $R(x_0)$ has a limit. The second conclusion, that the sequence must be divergent, follows from the fact that if there exists some finite constant $C_0$ such that all the values of $S_3(x_0)$ are upper-bounded by $C_0$, the function $F_3(x_0)$ is indefinitely exhausting values in the interval $[1,C_0] \cap \mathbb Z_{c3}$ until it repeats someone, thus defining a cycle. And we can repeat the argument for subsequent greater values of constants $C_n$. Therefore, we must analyze the possible existence of a divergent sequence $S_3(x_0)$ whose related sequence of parity coefficients $R(x_0)$ has no limit.
		
	Since $F_3^j(x_0)/x_0 \approx (3^{\mu_j}/2)^j$ (equation \ref{eq:xk_2}) for sufficiently large values of $j$ and $x_0 \in \mathbb Z_{c3}$, there must exist some $k \in \mathbb Z^+$ such that $\mu_j > \mu_k > \ln 2/\ln 3$ and $F_3^j(x_0) > F_3^k(x_0)$ for all $j>k$. Then, we make a renormalization such that $y_0 = F_3^k(x_0)$, and define the sequence $S_3(y_0) = \{F^{n}_3(y_0)\}_{n \in \mathbb N}$, which is a sub-sequence of the original $S_3(x_0)$. The sequence $S_3(y_0)$ has the related sequence of parity coefficients $R(y_0) = \{\hat{\mu}_{h} \}_{h \in \mathbb Z^+}$, which is not a sub-sequence of the original $R(x_0)$. From the simple relation
	
	\[ \frac{P_h}{h} > \frac{P_h + P_k}{h+k} > \frac{P_k}{k}, \quad j = h+ k, \]
	
we have that $\hat{\mu}_h > \mu_{h+k}$ for all $h \in \mathbb Z^+$. Since the sequence $S_3(x_0)$ is divergent, we can repeat the same argument as many times as necessary, obtaining each time a sequence of greater parity coefficients. After an infinite number of steps, we get a sequence of parity coefficients sufficiently close to 1, their highest possible value. But in that case, the sequence of parity coefficients has limit 1, which is a contradiction and an impossibility due to lemma \ref{lemma:purely_divergent}. Therefore, the sequence $R(x_0)$ must have a limit. 

Although $R(x_0)$ is a convergent sequence with limit the asymptotic parity coefficient $\mu_{\infty}$, the sequence $S_3(x_0)$ could be divergent (it couldn't reach a non-trivial cycle due to theorem \ref{theorem:Mersenne_cycles}). Note also that from lemmas \ref{lemma:purely_divergent} and \ref{lemma:bounds}, $\mu_{\infty}$ must be bounded within $[1/2, 1)$. The proof is straightforward. Assume that $\mu_{\infty} < 1/2$ (thus $x_0 \neq 5$). Taking the limit in equation \ref{eq:lemma_bounds}, we have
	\[  \lim_{k \rightarrow \infty} \frac{F_3^k(x_0)}{x_0} \leq \lim_{k \rightarrow \infty} 3^{-(1/2-\mu_{\infty})k} = 0, \] 
	
	and we get a contradiction. 
		
	Now, let us compute $\mu_{\infty}$. The question here is whether $\mu_{\infty}$ has the same value for every sequence. Since the truncated sequence $S^k_3(x_0)$ is arbitrarily long, and apparently doesn't exhibit any structure (pseudo-randomness), it is reasonable to use a probabilistic model. Rather than using the individual evaluations of the parity function as random events, it is better to work with the whole parity vectors, since we don't need to deal with the independence of the successive events. For a given $k \in \mathbb Z^{+}$ sufficiently large, let $M_k$ be a discrete random variable that means the parity coefficient of the vector $\mathbf A_k$ corresponding to the sequence $S_3^k(x_0)$, 
		
		\begin{align} 
			M_k: \; \mathcal A_k \longrightarrow &\mathcal W_k = \left\{0, \frac{1}{k}, \frac{2}{k}, \ldots, \frac{k-1}{k}, 1 \right\}  \nonumber \\
			\mathbf A_k(x_0) \longrightarrow &M_k(\mathbf A_k(x_0)) = \frac{1}{k} |\mathbf A_k|_{0}^{k-1} \nonumber		
		\end{align}
		
		where $\mathcal A_k$ is the set of all parity vectors of length $k$, and the seed $x_0$ is any (random) value in $\mathcal X_k = \{X_3(n_0) : n_0 = 1,2, 3, \ldots, 2^k\}$. As lemma \ref{lemma:parity_sequence} shows, all the parity vectors of $\mathcal A_k$ are represented with the same density $2^{-k}$. Furthermore, there are exactly $\binom{k}{k\mu}$ parity vectors with total parity $P_k = k \mu$, $\mu \in \mathcal W_k$. Therefore, the probability mass function of $M_k$ is	 
	
	\begin{equation}
			\mathbb P(M_k=\mu)= \frac{1}{2^k} \left( 
			\begin{array}{c}
				k \\
				k\mu
			\end{array} \right) = \frac{1}{2^k} \frac{k!}{(k\mu)! (k-k\mu)!}, \quad \quad k\mu \in \mathbb N, \quad \mu \in \mathcal W_k,
	\end{equation}
	
	\noindent and its moment generating function,
	
		\[ \Psi_k(t) = \frac{1}{2^k} (1 + e^{t/k})^k, \quad t \in \mathbb R,\]
		
\noindent with the expected value $\mathbb E(M_k)= \Psi'_k(0) = 1/2$ (first derivative of $\Psi_k(t)$ at zero) and the standard deviation $\sigma_k = \sqrt{\Psi''_k(0)-\mathbb E(M_k)^2} = 1/(2\sqrt{k})$. Given that the probability mass function of $M_k$ is symmetric about $1/2$ and $\lim_{k \rightarrow \infty} M_{k}$ must be bounded within $[1/2, 1)$, for any $\epsilon >0$ we have

 \[ \mathbb P \left( \lim_{k \rightarrow \infty} \left| M_k - \frac{1}{2} \right| \geq \epsilon \right) = 2 \mathbb P\left( \lim_{k \rightarrow \infty} M_k - \frac{1}{2} \leq - \epsilon \right) = 0,  \]

where $|\cdot|$ refers to the standard absolute value. Therefore, $M_k$ converges almost surely to its expected value $1/2$. 

If there exists some exceptional sequence that has an asymptotic parity coefficient different from $1/2$, it must break systematically this probabilistic behavior \emph{ad infinitum}, meaning that there is an underlying structure. This happens when the sequence ultimately enters into a cycle with parity coefficient $\mu_{\infty}$. But, by theorem \ref{theorem:Mersenne_cycles}, the only possible cycle is the trivial one, with parity coefficient precisely $1/2$. This finally proves the theorem in all cases. \qed 

\vspace{0.5cm}
It seems that it is not possible to generalize such strong result for any odd number $q$. First, there could be more unknown cycles, and second, even if they are known, such as in the case of a Mersenne number, their parity coefficient is not equal to the probabilistic one. This forces us to develop a mechanism to decide when a sequence becomes periodic and when the sequence goes on and on without an apparent structure, facing obligatorily the decidability barrier of the algorithm. However, we can conclude that for all $q\geq 5$, most of the numbers will lead to divergent sequences. It is shown in theorem \ref{theorem:divergence}, whose proof is bellow.

\subsection{Proof of Theorem \ref{theorem:divergence}}\label{subsec:proof_theorem_divergence}
Following the same probabilistic model as in theorem \ref{theorem:equiparity}, let $M_k=|\mathbf A_k|_0^{k-1}/k$ be the random variable from the set $\mathcal A_k$ of parity vectors to the set $\mathcal W_k = \{0,1/k,2/k,\ldots, 1\} \subset \mathbb Q$. Since lemma \ref{lemma:parity_sequence} also applies for each $q \geq 5$, the probability mass function of $M_k$ is again

	\[ \mathbb P(M_k=\mu)= \frac{1}{2^k} \binom{k}{k\mu}, \quad k\mu \in \mathbb N, \quad \mu \in \mathcal W_k, \]

with expected value $\mathbb E(M_k)= 1/2$ and standard deviation $\sigma_k = 1/(2\sqrt{k})$. Using now the Chebyshev inequality, we have

	\[ \mathbb P \left( \left| M_k - \frac{1}{2} \right| \geq \eta \sigma_k  \right) \leq \frac{1}{\eta^2}, \]
	
	\noindent where $\eta > 0$ is any real number. For each $0<\epsilon<1/2$ such that $ \eta = \epsilon / \sigma_k$, we have

	\[ \mathbb P \left( \left| M_k - \frac{1}{2} \right| \geq \epsilon \right) \leq \frac{1}{4 \epsilon^2 k}. \]
	
	\noindent Using the symmetry of the probability mass function of $M_k$ and rearranging terms, we have 
	
	\[ \mathbb P \left( M_k > \frac{1}{2} - \epsilon \right) \geq 1 - \frac{1}{8 \epsilon^2 k}. \]
  
\noindent From equation \ref{eq:lower_limit}, it follows that the function $F_q(x)$ diverges when $\mu_k > \ln 2/\ln q$ for a $k$ sufficiently large. Since we are interested in computing the probability of such event, $\mathbb P( M_k > \ln2/\ln q)$, we take the value $\epsilon=1/2-\ln 2/ \ln q$ ($q \geq 5$) in the above equation, and it results 

	\[  \mathbb P \left( M_k > \frac{\ln 2}{\ln q} \right) \geq 1 - \frac{\ln^2 q}{k (2 \ln^2 q - 8 \ln 2 (\ln q - \ln 2))}. \]

  Note that $\ln 2/\ln q \notin \mathcal W_k$. 
	
	Finally, let us compute now the natural density of the set $\mathcal V_q$. We need the proportion of natural numbers in $[1,2^k]$ that lye in $\mathcal V_q$. Therefore,
	
\begin{align}
	D(\mathcal V_q) &= \lim_{t \rightarrow \infty} \frac{1}{t} \# \left\{ n \in \mathcal V_q : n \leq t \right\} = \lim_{k \rightarrow \infty} \frac{1}{2^k} \# \left\{ n \in \mathcal V_q : n \leq 2^k \right\} \nonumber \\
				&= \lim_{k \rightarrow \infty} \frac{1}{2^k} \sum_{\frac{\ln 2}{\ln q}<z \in \mathcal W_k} \binom{k}{kz} = \lim_{k \rightarrow \infty} \mathbb P \left( M_k > \frac{\ln 2}{\ln q} \right) = 1, \nonumber 
\end{align} 

where we have used from lemma \ref{lemma:parity_sequence} that for each $n \in \mathcal V_q$, there is a bijection with the related parity vectors $\mathbf A_k$ with total parity $P_k > k \ln 2/\ln q$. \qed 
	
\vspace{0.5cm}
Even with this remarkable result, it is not known any divergent sequence. This is related, again, with the decidability of an algorithm, since we don't know which number $x_0$ leads to a periodic sequence, and which number $x_0$ explodes to infinity. I discuss this in the next section.

\section{On the decidability of the Collatz general problem}\label{sec:decidability}
We begin this section with an important result due to J.H. Conway in 1972 \cite{conway_unpredictable_1972}. It states that a natural generalization of the algorithm $qn+1$ is undecidable.

\begin{theorem}\label{theorem:Conway}(Conway, 1972)
	There is no algorithm which, given an integer $n$ and a generalized Collatz function $g$ of the form
	\[ g(n) = a_i n + b_i \quad \quad if \quad n \equiv i \quad \left(\text{mod } d \right), \quad 0\leq i \leq d-1, \]
	where $2\leq d \in \mathbb N$ and the coefficients $a_i$ and $b_i$ are rational numbers such that $g(n)$ is always integral, determines whether or not there exists a positive integer $k$ such that $g^k(n)=1$.
\end{theorem}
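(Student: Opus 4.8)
The plan is to prove undecidability by reduction from a problem already known to be undecidable, namely the halting problem for Turing machines. The essential fact to be established is that the class of generalized Collatz functions is \emph{computationally universal}: a single such map, iterated, can faithfully simulate the step-by-step execution of an arbitrary computing device. The most transparent route uses an intermediate universal model whose configurations admit a natural arithmetic encoding, such as Minsky two-counter machines (equivalently, one may invoke Conway's later FRACTRAN formulation).

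First I would fix the model: Minsky machines with two counters, whose halting problem is undecidable since they simulate arbitrary Turing machines. A configuration in control state $s$ with counters holding the values $(r_1, r_2)$ I would encode by the positive integer
\[ N = p_1^{r_1}\, p_2^{r_2}\, p_s, \]
a G\"odel-style prime-power encoding in which $p_1, p_2$ are fixed primes and the $p_s$ are distinct primes, one per state. Under this encoding, incrementing a counter amounts to multiplying $N$ by a prime, decrementing to dividing by a prime, testing a counter for zero to a divisibility condition ($p_i \mid N$ fails exactly when $r_i = 0$), and reading the current state to detecting which state prime divides $N$. All of these are expressible through the residue class of $N$ modulo $d := p_1 p_2 \prod_s p_s$.

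Next I would construct $g$. For each residue $i$ modulo $d$, the value of $i$ already determines the current state and the zero/nonzero status of both counters, hence the unique machine instruction to be applied; I would set $b_i = 0$ and let $a_i$ be the rational that multiplies by, or divides out, the appropriate primes and switches the state prime. One then checks that $a_i n$ is integral for every $n \equiv i \pmod d$ --- equivalently, that the denominator of $a_i$ divides $\gcd(i,d)$ --- so that $g$ is everywhere integral as required; the residue classes $i$ not corresponding to any legitimate configuration are assigned the identity rule $a_i = 1$, turning them into harmless fixed points. By construction a single application of $g$ realizes exactly one machine step. Finally I would adjoin to the halt state a short routine that divides out all remaining prime factors, so that the machine's terminating configuration is driven to the integer $1$, whence $g^k(N) = 1$ for some $k$ if and only if the machine halts from the configuration encoded by $N$.

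The reduction then closes: given any Turing machine $T$ and input $x$, the construction produces algorithmically a generalized Collatz function $g_T$ and a seed $n_{T,x}$ with $g_T^{\,k}(n_{T,x}) = 1$ for some $k \ge 1$ precisely when $T$ halts on $x$. A decision procedure for the reachability question would therefore decide the halting problem, a contradiction. I expect the main obstacle to be the faithful-simulation step --- verifying that the residue dispatch selects the correct instruction for \emph{every} reachable configuration, that $g$ remains integral on all of $\mathbb{Z}$ rather than only on reachable encodings, and that the orbit reaches $1$ exactly at halting and never at a spurious intermediate value. The most delicate piece is the zero-test branch, which must distinguish a counter value of $0$ from a positive one using congruence information alone.
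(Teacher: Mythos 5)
The paper does not prove this statement: Theorem \ref{theorem:Conway} is imported verbatim from Conway's 1972 paper \cite{conway_unpredictable_1972} and used as a black box, so there is no internal proof to compare against. Your outline is, in essence, the canonical argument (and Conway's own): encode Minsky two-counter configurations as integers $p_1^{r_1}p_2^{r_2}p_s$, observe that the residue modulo $d=p_1p_2\prod_s p_s$ determines the state and the zero/nonzero status of each counter, realize each instruction by a purely multiplicative rule $a_i$ whose denominator divides $\gcd(i,d)$, and append a clean-up routine so that reaching $1$ is equivalent to halting; this correctly reduces the halting problem to the stated reachability question. The one point you should make explicit when writing it up is the zero-test you flag as delicate: it works precisely because $p_j\mid N$ if and only if $r_j\geq 1$, and divisibility by $p_j$ is determined by $N \bmod d$ since $p_j\mid d$ --- with that observation in place the sketch closes without difficulty.
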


This results warns us about the possible undecidability of the $qn+1$ problem and may be the answer to the difficulties found when looking for general patterns on the sequences. It just seems that the decidability/undecidability threshold is a barrier which prevents us to explore more deeply. 

The possible undecidability of the $qn+1$ algorithm means that the function $F_q(x)$ has computational capabilities in the sense of a Turing machine. But by the other hand, if the function $F_q(x)$ has enough amount of randomness in their successive iterates, then this would fight against an undecidability nature. This will remain an open problem, but we take part in favor of undecidability through the following conjecture.

\begin{conjecture}(Undecidability of the algorithm $qn+1$)
	There is no algorithm which, given an odd integer $q\geq 5$ and a positive integer $x \in \mathbb Z_{cq}$, determines whether or not there exists a positive integer $k$ such that $F_q^k(x)=2q-1$, where $F_q(x)$ is the modified Collatz general function defined in equation \ref{eq:modified_Collatz_function}.
\end{conjecture}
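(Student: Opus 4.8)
The plan is to prove undecidability by reduction: exhibit a computable embedding of a known undecidable reachability problem into the predicate ``the orbit of $x$ under $F_q$ reaches $2q-1$.'' Since $2q-1 = X_q(1)$, this predicate is equivalent, via the conjugacy of Lemma~\ref{lemma:conjugacy}, to asking whether the original iteration $T_q$ started at the given seed ever reaches $1$. The natural source of undecidability is Conway's Theorem~\ref{theorem:Conway}: there is a fixed generalized Collatz function $g(n)=a_i n+b_i$ (for $n\equiv i \bmod d$) whose predicate ``$\exists k:\ g^k(n)=1$'' is undecidable. The goal is therefore to simulate one step of such a $g$ by a bounded block of iterations of $F_q$, for a value of $q$ and an initial point $x$ computed from the instance $n$.

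First I would set up the simulation layer. Because the input here includes both $q$ and $x$, there are two degrees of freedom: I can choose $q$ to hard-code the (constant) transition table $(d,(a_i,b_i))$ of $g$, and I can choose the seed $x\in\mathbb Z_{cq}$ to encode the current configuration $n$ of the simulated machine. Using the closed forms of Proposition~\ref{prop:formula_xk}, a run of $\ell$ iterations of $F_q$ acts on the seed by an affine rule governed by its parity vector $\mathbf A_\ell$; the aim is to arrange the arithmetic so that, reading off residues modulo a power of $2(q-1)$ (in the spirit of Lemma~\ref{lemma:parity_sequence}), each macro-block reproduces both the residue test $n\equiv i \bmod d$ and the affine update $n\mapsto a_i n+b_i$ of $g$. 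I would then verify that the target $1$ of $g$ corresponds exactly to reaching the seed $X_q(1)=2q-1$, so that halting of the simulation is detected precisely by the reachability predicate in the statement.

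The hard part, and the reason this remains a conjecture, is that $F_q$ has only \emph{two} branches (even/odd) with a \emph{single} multiplier $q$, whereas Conway's universality crucially exploits $d$ independent residue classes with independent coefficients $a_i,b_i$. Compressing $d$-way branching and $d$ distinct affine maps into the rigid two-branch map $F_q$, while keeping reachability of $2q-1$ faithful to halting, is the genuine obstacle; it is far from clear that the lone parameter $q$ carries enough information to emulate an arbitrary transition table. Moreover, the mechanism cuts against the grain of Theorem~\ref{theorem:divergence}: for $q\ge 5$ a density-one set of seeds diverges deterministically, driven by $\mu_k>\ln 2/\ln q$ in the lower bound \ref{eq:lower_limit}, so any universal family of computations must live on the measure-zero complement of eventually-returning orbits. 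Reconciling computational universality with this density-one divergence, i.e. isolating a sparse but Turing-complete sub-family of $(q,x)$ instances, is where I expect the real work, and possibly an impossibility, to lie. Absent such a construction the statement stands as a conjecture rather than a theorem.
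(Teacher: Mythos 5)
There is a genuine gap, but you should first note that the paper itself offers no proof of this statement: it is stated as a conjecture, and Section~\ref{sec:decidability} gives only heuristic ``supporting arguments'' (the reachability relation $F_q^k(x)=2q-1$ unfolds into an exponential Diophantine equation with an unbounded number of unknowns; the density-one divergence of Theorem~\ref{theorem:divergence} suggests orbits run forever; the iteration plausibly admits recurrent structure). So there is no proof in the paper against which your argument could be matched, and your own text candidly concedes the same: you describe a reduction \emph{strategy} from Theorem~\ref{theorem:Conway} and then identify exactly why it does not close.

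The concrete missing step is the simulation itself. You never construct the map from an instance $(g,n)$ of Conway's problem to a pair $(q,x)$, and the obstruction you name --- compressing a $d$-way residue test with $d$ independent affine updates $n\mapsto a_i n+b_i$ into the rigid two-branch map $F_q$, whose only tunable datum is the single odd parameter $q$ --- is real and is not merely a matter of bookkeeping. Lemma~\ref{lemma:parity_sequence} works against you here: the first $k$ steps of any orbit are completely determined by the seed modulo $(q-1)2^{k+1}$, and \emph{every} parity vector in $\{0,1\}^k$ is realized exactly once among seeds in $\mathcal X_k$, so the short-time dynamics is uniformly ``generic'' and offers no obvious place to hard-code a transition table. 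Your reduction would also have to be faithful on the halting side, i.e.\ the simulated machine halts if and only if the orbit reaches $2q-1=X_q(1)$ and not merely some bounded set, which you do not address. In short: your route (a reduction from Conway's undecidability result) is a genuinely different and more disciplined line of attack than the paper's plausibility arguments, but neither yours nor the paper's establishes the statement; both correctly leave it as a conjecture.
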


\begin{remark}\label{remark:undecidability}
This conjecture does not preclude the existence of particular cases of $q$ in which the algorithm can be decidable.  
\end{remark}

Supporting arguments to that conjecture are presented bellow. Firstly, for a given $q$, the relation $F_q^k(x)=2q-1$ defines the exponential Diophantine equation

\[ 2^k(2q-1) = q^{|\mathbf A_k|_{0}^{k-1}} x + \sum_{j=0}^{k-1} 2^j q^{|\mathbf A_k|_{j}^{k-1}}, \]

where the unknowns are $x \in \mathbb Z_{cq}$ and the parity vector $\mathbf A_k$ (with its length $k$). Thus, we must solve an exponential Diophantine equation with an arbitrary large number ($k+1$) of unknowns.

Secondly, the statistical asymptotic divergent behavior of $F_q(x)$ for $q\geq 5$ points towards an endless running of the sequence, having enough room in principle to reach any desired number. 

Finally, the iterative behavior of the function $F_q(x)$ makes plausible some kind of recurrent structure in the long run. As a final remark, the case $q=7$ seems to be a good candidate for further development in determining the decidability/undecidability of the general $qn+1$ algorithm.

\bibliography{Collatz}
\bibliographystyle{plain}

\end{document}